\documentclass[11pt]{article}

\usepackage[T1]{fontenc}
\usepackage{lmodern}
\usepackage[margin=1.02in]{geometry}
\usepackage{microtype}
\usepackage{amsmath,amssymb,amsthm,mathtools}
\usepackage{booktabs}
\usepackage{array}
\usepackage[hidelinks]{hyperref}

\allowdisplaybreaks
\numberwithin{equation}{section}

\newtheorem{theorem}{Theorem}[section]
\newtheorem{proposition}[theorem]{Proposition}
\newtheorem{corollary}[theorem]{Corollary}
\newtheorem{lemma}[theorem]{Lemma}
\theoremstyle{definition}
\newtheorem{definition}[theorem]{Definition}
\theoremstyle{remark}
\newtheorem{remark}[theorem]{Remark}

\newcommand{\Q}{\mathbf Q}
\newcommand{\Z}{\mathbf Z}
\newcommand{\F}{\mathbf F}
\newcommand{\Gal}{\operatorname {Gal}}
\newcommand{\Cl}{\operatorname {Cl}}

\newcommand{\length}{\operatorname {length}}
\newcommand{\legp}[3]{\left(\frac{#1}{#2}\right)_{\!#3}}

\title{Explicit Twisted Hilbert Class Components\\
Beyond Classical Irregularity}
\author{Peter Chocian}
\date{25 July 2026}

\begin{document}
\maketitle

\begin{abstract}
Let \(p\equiv1\pmod6\) be prime and
\(K_p=\Q(\zeta_{3p})\).  We study the reflected circular unit
\[
 \mu_p=\frac{1+z\zeta_p}{1+\bar z\zeta_p},
 \qquad
 z=-\zeta_3^2,\quad \bar z=1-z,
\]
and its character projections.  A universal Stirling polynomial
\(P_m\) gives an exact identity between the anti-spectrum of
\(\mu_p\) and the primitive divided
\(\chi_{-3}\)-twisted Stickelberger spectrum:
\[
 P_{p-j}(h)-P_{p-j}(1-h)
 =-(2h-1)(j-1)!\,b_j ,
 \qquad h=\frac z{1+z}.
\]
Thus the locally blind lines of the reflected unit are precisely
the zeros of the corresponding divided twisted Bernoulli
eigenvalues.

For every \(p<500\), we enumerate these zeros.  Exactly twelve
character lines occur.  On each line an explicit integral
idempotent product of \(\mu_p\) is a local \(p\)-th power at the
conductor primes but not a global \(p\)-th power.  Small completely
split primes provide finite Artin certificates.  The generalized
Bernoulli number has exact \(p\)-adic valuation one in every case;
the character-wise Main Conjecture therefore proves that each
radical generates the complete Hilbert-class-field component,
which has order \(p\).  Seven of the twelve lines occur at
classically regular primes.  Hence twisted degeneracy below \(500\)
is more often invisible to ordinary irregularity than aligned with
it.  The first case, \(p=67\), is worked out in full, and a
deterministic integer-arithmetic program reproduces the enumeration
and every certificate.
\end{abstract}

\medskip
\noindent
\textbf{Keywords.}
Cyclotomic fields, circular units, Hilbert class fields,
generalized Bernoulli numbers, Kummer extensions, reflection.

\noindent
\textbf{2020 Mathematics Subject Classification.}
11R18, 11R23, 11R29, 11Y40.

\section{Introduction}

Let
\[
 K_p=\Q(\zeta_{3p}),\qquad p\equiv1\pmod6.
\]
The generalized Bernoulli values attached to odd characters of
\(\Gal(K_p/\Q)\) control the corresponding \(p\)-parts of the
ideal class group.  Herbrand--Ribet and the Main Conjecture explain
the existence and size of these components.  A different problem
is to write down an explicit Kummer radical for the associated
unramified extension and to certify its nontriviality by a finite
calculation.

This paper supplies such radicals for a natural family.  Put
\[
 \eta=\zeta_p,\qquad
 z=-\zeta_3^2,\qquad
 \bar z=1-z=-\zeta_3,
\]
and consider the norm-one circular unit
\[
 \mu_p=\frac{1+z\eta}{1+\bar z\eta}.
\]
The unit arose as an orientation coordinate in a study of
cyclotomic factorisations.  Its local expansion unexpectedly
selects exactly the primitive \(\chi_{-3}\)-twisted Bernoulli
zeros.  That identity is the structural starting point here; all
results in the paper are unconditional and independent of the
existence of a solution to Fermat's equation.

For even \(j\), \(2\leq j\leq p-3\), set
\[
 k=p-j,\qquad
 \psi=\chi_{-3}\omega^j,\qquad
 \theta=\chi_{-3}\omega^k,
\]
where \(\omega\) is the mod-\(p\) cyclotomic character.  The
characters are reflected:
\[
 \psi=\omega\theta^{-1}.
\]
The projected radical lies on the even \(\theta\)-line, while its
unramified Kummer character lies on the odd \(\psi\)-line.

The results have three layers.

\begin{enumerate}
\item A universal projector polynomial identifies the local
      anti-spectrum of \(\mu_p\) with divided twisted Bernoulli
      eigenvalues, including the exact nonzero scalar.
\item A complete scan for \(p<500\) finds twelve vanishing lines.
      For each, an explicit split-prime residue computation proves
      that the projected unit is not a global \(p\)-th power.
\item A second-order local computation proves that every zero is
      simple.  The character-wise Main Conjecture then proves that
      every resulting class component has order exactly \(p\).
\end{enumerate}

The twelve lines include seven rows at classically regular primes:
\[
 139,\ 199,\ 241,\ 331,\ 337\text{ (twice)},\ 457.
\]
This separation is not exceptional in the tested range; it is the
majority phenomenon.

The construction is explicit.  Every radical is a product with
integer exponents, and every nontriviality witness is a modular
power at a listed rational prime.  The accompanying verification
program uses only deterministic integer arithmetic.

\section{Characters and the reflected circular unit}

Write
\[
 G_p=(\Z/3p\Z)^\times=\Gal(K_p/\Q),
\qquad
 \sigma_a(\zeta_{3p})=\zeta_{3p}^a.
\]
Let \(\chi=\chi_{-3}\) be the quadratic character of conductor
\(3\), and let \(\omega(a)\equiv a\pmod p\).  Complex conjugation
shows that \(\chi\omega^j\) is odd when \(j\) is even.

\begin{lemma}
\label{lem:global-unit}
The numerator and denominator of \(\mu_p\) are global units of
\(K_p\).  Moreover,
\[
 N_{K_p/\Q(\zeta_3)}(1+z\eta)=1
\]
and
\[
 \tau(\mu_p)=\mu_p^{-1},
\]
where \(\tau\) fixes \(\eta\) and interchanges \(z,\bar z\).
\end{lemma}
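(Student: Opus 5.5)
The plan is to make everything explicit by recognising \(z\) and \(\bar z\) as roots of unity, and then to work with cyclotomic products. Since \(-1=e^{i\pi}\) and \(\zeta_3^2=e^{4\pi i/3}\), we get \(z=-\zeta_3^2=e^{\pi i/3}\), a primitive sixth root of unity. Hence
\[
 1+z\eta=1-\zeta_3^2\eta,\qquad 1+\bar z\eta=1-\zeta_3\eta .
\]
Because \(\gcd(3,p)=1\), both \(\zeta_3^2\eta\) and \(\zeta_3\eta\) are primitive \(3p\)-th roots of unity. We will also record the identity \(1+\zeta_3+\zeta_3^2=0\), which gives \(\bar z=-\zeta_3=1+\zeta_3^2=1-z\). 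This confirms that the notation \(\bar z=1-z\) is consistent with \(\bar z\) being the complex conjugate of \(z\).

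\textbf{Unit property.} We invoke the standard fact that \(1-\xi\) is a unit whenever \(\xi\) is a root of unity whose order \(n\) is not a prime power. The reason is that \(\Phi_n(1)=1\) for such \(n\), and \(\Phi_n(1)=\prod(1-\xi')\) over the primitive \(n\)-th roots \(\xi'\). With \(n=3p\) this gives the first claim for both numerator and denominator.

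\textbf{Norm computation.} We identify \(\Gal(K_p/\Q(\zeta_3))\) with the automorphisms \(\sigma_a\) with \(a\equiv1\pmod 3\). These act on \(\eta\) through \((\Z/p\Z)^\times\) and fix \(w:=\zeta_3^2\). So the norm is \(\prod_{a=1}^{p-1}(1-w\eta^a)\). From the factorisation \(\prod_{\xi^p=1}(1-w\xi)=1-w^p\), removing the factor \(\xi=1\) gives
\[
 \frac{1-w^p}{1-w}.
\]
Since \(p\equiv1\pmod 3\), we have \(w^p=w\), so the norm equals \(1\). The same argument applies verbatim to the denominator, with \(w=\zeta_3\).

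\textbf{Action of \(\tau\).} Let \(\tau=\sigma_a\), where by the Chinese remainder theorem \(a\equiv1\pmod p\) and \(a\equiv2\pmod 3\). Then \(\tau\) fixes \(\eta\) and sends \(\zeta_3\mapsto\zeta_3^2\), and therefore \(\zeta_3^2\mapsto\zeta_3\). Consequently \(\tau(z)=-\zeta_3=\bar z\) and \(\tau(\bar z)=z\). So \(\tau\) interchanges \(1+z\eta\) and \(1+\bar z\eta\), which gives \(\tau(\mu_p)=\mu_p^{-1}\).

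No step here is a real obstacle. The only points needing care are the sign bookkeeping showing that \(z\) is a primitive sixth root, so that \(-z\eta\) has order exactly \(3p\), and the use of \(p\equiv1\pmod3\) to get \(w^p=w\) in the norm.
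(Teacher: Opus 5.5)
Your proof is correct and follows the same argument as the paper. Both proofs get the unit property from \(\Phi_{3p}(1)=1\), using that \(\zeta_3^2\eta\) and \(\zeta_3\eta\) are primitive \(3p\)-th roots of unity; both compute the norm as \((1-w^p)/(1-w)=1\) via \(p\equiv1\pmod 3\), which the paper writes as \((1+z^p)/(1+z)\); and both observe that \(\tau\) exchanges numerator and denominator, with your explicit description of \(\Gal(K_p/\Q(\zeta_3))\) and of \(\tau=\sigma_a\) merely filling in details the paper leaves implicit.
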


\begin{proof}
Both \(1+z\eta\) and \(1+\bar z\eta\), up to roots of unity, have
the form \(1-\zeta_{3p}^u\) with \((u,3p)=1\).  Since \(3p\) has
two distinct prime divisors,
\[
 \Phi_{3p}(1)=1,
\]
so both elements are units.  More directly,
\[
 \prod_{r=1}^{p-1}(1+z\eta^r)
 =\frac{1+z^p}{1+z}=1,
\]
because \(z^p=z\) for \(p\equiv1\pmod6\).  This is the genuine
relative norm; the omitted \(r=0\) factor is essential.  Finally,
\(\tau\) exchanges numerator and denominator.
\end{proof}

For the reflected pair \((j,k)\), define integral coefficients
\[
 c_a\in\{0,\ldots,p-1\},\qquad
 c_a\equiv
 \bigl(2(p-1)\bigr)^{-1}\theta(a)^{-1}\pmod p,
\]
and put
\begin{equation}
\label{eq:projected-unit}
 \widetilde\mu_k
 =
 \prod_{a\in G_p}\sigma_a(\mu_p)^{c_a}.
\end{equation}

\begin{proposition}[Integral projected representative]
\label{prop:integral-projector}
The class of \(\widetilde\mu_k\) in
\(K_p^\times/K_p^{\times p}\) is the \(\theta\)-projection of
\(\mu_p\).  In particular,
\[
 \sigma_b[\widetilde\mu_k]
 =\theta(b)[\widetilde\mu_k]
 \quad (b\in G_p).
\]
Changing the chosen integer lifts \(c_a\) changes
\(\widetilde\mu_k\) only by a global \(p\)-th power.
\end{proposition}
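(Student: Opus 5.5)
The plan is to treat \(V=K_p^\times/K_p^{\times p}\) as an \(\F_p[G_p]\)-module and to use the standard orthogonal idempotents of \(\F_p[G_p]\). Since \(|G_p|=2(p-1)\) is prime to \(p\), and every character of \(G_p\) takes values in \(\mu_{2(p-1)}\subset\F_p^\times\) (the values of \(\chi\) and of \(\omega\) lie in \(\F_p\)), the group algebra splits over \(\F_p\). Concretely, for each character \(\theta\) I set
\[
 e_\theta=\frac1{|G_p|}\sum_{a\in G_p}\theta(a)^{-1}\sigma_a\in\F_p[G_p].
\]
The \(\theta\)-projection of a class \([x]\in V\) is by definition \(e_\theta[x]\). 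Written multiplicatively, \(e_\theta[\mu_p]\) is the class of \(\prod_a\sigma_a(\mu_p)^{c_a}\) for any integers \(c_a\) lifting \(|G_p|^{-1}\theta(a)^{-1}=(2(p-1))^{-1}\theta(a)^{-1}\bmod p\). This is exactly the definition of \(\widetilde\mu_k\) in \eqref{eq:projected-unit}. One point to note is that the action of \(\Z[G_p]\) on \(K_p^\times\) descends to an action of \(\F_p[G_p]\) on \(V\), because \(p\)-th powers are killed.

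For well-definedness I will argue as follows. If \(c_a'\equiv c_a\pmod p\), write \(c_a'=c_a+p\,t_a\). Then
\[
 \prod_a\sigma_a(\mu_p)^{c_a'}=\widetilde\mu_k\cdot\Bigl(\prod_a\sigma_a(\mu_p)^{t_a}\Bigr)^p,
\]
so the two choices differ by a global \(p\)-th power, which proves the final sentence of the statement. It is also worth recording that \(\mu_p\in K_p^\times\) by Lemma~\ref{lem:global-unit}, so \(\widetilde\mu_k\) is a global unit of \(K_p\).

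For the eigen-relation I compute \(\sigma_b e_\theta\) in \(\F_p[G_p]\) by reindexing with \(a'=ba\):
\[
 \sigma_b e_\theta=\frac1{|G_p|}\sum_a\theta(a)^{-1}\sigma_{ba}=\frac1{|G_p|}\sum_{a'}\theta(b^{-1}a')^{-1}\sigma_{a'}=\theta(b)\,e_\theta.
\]
Applying this to \([\mu_p]\) gives \(\sigma_b[\widetilde\mu_k]=\theta(b)[\widetilde\mu_k]\). Here \(\theta(b)\) is read in \(\F_p^\times\) and acts on \(V\) by exponentiation with any integer lift. For completeness I would also check that \(e_\theta\) is an idempotent, using the orthogonality relation \(\sum_a\theta(a)^{-1}\theta'(a)=|G_p|\,\delta_{\theta\theta'}\) in \(\F_p\). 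This shows that \(e_\theta V\) is precisely the \(\theta\)-eigenspace, so calling the class "the \(\theta\)-projection" is justified.

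I expect no real obstacle. The only point needing care is that \(\theta=\chi_{-3}\omega^k\) takes values in \(\F_p\) rather than merely in \(\overline{\F}_p\). This holds because \(\omega\) is \(\F_p^\times\)-valued by definition and \(\chi_{-3}=\pm1\). It is also the point where \(p\nmid 2(p-1)\) is used to invert \(|G_p|\).
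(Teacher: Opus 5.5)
Your proposal is correct and follows the paper's proof. The \(c_a\) are integer lifts of the coefficients of the idempotent \(e_\theta\in\F_p[G_p]\), which exists because \(p\nmid 2(p-1)\), and changing a lift by a multiple of \(p\) changes the product by a \(p\)-th power. You also write out the eigen-relation \(\sigma_be_\theta=\theta(b)e_\theta\) and the orthogonality check, which the paper leaves implicit.

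One small slip: \(\mu_{2(p-1)}\not\subset\F_p^\times\). What you actually need, and what your parenthetical gives, is that \(G_p\cong(\Z/3\Z)^\times\times(\Z/p\Z)^\times\) has exponent \(p-1\). Hence all its characters take values in \(\mu_{p-1}=\F_p^\times\).
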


\begin{proof}
As
\[
 |G_p|=\varphi(3p)=2(p-1)
\]
is prime to \(p\), the idempotent
\[
 e_\theta=\frac1{|G_p|}
 \sum_{a\in G_p}\theta(a)^{-1}\sigma_a
\]
is defined in \(\F_p[G_p]\).  The \(c_a\) are integer lifts of its
coefficients.  Adding a multiple of \(p\) to one exponent
multiplies the product by a \(p\)-th power.
\end{proof}

\section{Universal projectors and twisted reflection}

The local calculation is governed by a family of rational
polynomials independent of \(p\).

\begin{definition}[Universal projector polynomial]
\label{def:Pm}
For \(m\geq1\), define
\[
 P_m(X)=
 \frac1{m!}\sum_{n=1}^{m}
 (-1)^{m+n}(n-1)!
 \left\{\begin{matrix}m\\n\end{matrix}\right\}X^n,
\]
where the braces are Stirling numbers of the second kind.
\end{definition}

\begin{proposition}[Generating function]
\label{prop:generating}
The formal generating series of the polynomials \(P_m\) is
\[
 \boxed{\qquad
 \sum_{m\geq1}P_m(X)Y^m
 =-\log\bigl(1-X(1-e^{-Y})\bigr).
 \qquad}
\]
\end{proposition}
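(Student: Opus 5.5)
We expand the right-hand side directly. Put \(U=1-e^{-Y}\), a formal power series with zero constant term, so \(XU\) lies in the maximal ideal of \(\Q[X][[Y]]\) and the logarithm is well defined:
\[
 -\log(1-XU)=\sum_{n\geq1}\frac{X^nU^n}{n}.
\]
It therefore suffices to identify the coefficient of \(X^nY^m\) in \(U^n/n\) with the corresponding coefficient of \(P_m\), namely \((-1)^{m+n}(n-1)!\left\{\begin{smallmatrix}m\\n\end{smallmatrix}\right\}/m!\). Since \(U^n=O(Y^n)\), only \(m\geq n\) contributes. This is consistent with \(P_m\) having degree \(m\), and it also shows that the \(m=0\) term is absent.

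The key input is the classical exponential generating function for Stirling numbers of the second kind:
\[
 \frac{(e^{t}-1)^n}{n!}=\sum_{m\geq n}\left\{\begin{matrix}m\\n\end{matrix}\right\}\frac{t^m}{m!}.
\]
We substitute \(t=-Y\). Then \(e^{-Y}-1=-U\), so
\[
 U^n=(-1)^n(e^{-Y}-1)^n=(-1)^n n!\sum_{m\geq n}\left\{\begin{matrix}m\\n\end{matrix}\right\}\frac{(-1)^mY^m}{m!}.
\]
Dividing by \(n\) gives
\[
 \frac{U^n}{n}=\sum_{m\geq n}(-1)^{m+n}(n-1)!\left\{\begin{matrix}m\\n\end{matrix}\right\}\frac{Y^m}{m!}.
\]
Multiplying by \(X^n\) and summing over \(n\geq1\), we may exchange the order of summation: each \(Y^m\) coefficient receives contributions only from the finitely many \(n\leq m\). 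Collecting by \(m\) then reproduces Definition~\ref{def:Pm} exactly.

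The only step needing care is the sign bookkeeping in the substitution \(t=-Y\). The sign \((-1)^n\) comes from \(U=-(e^{-Y}-1)\), and the sign \((-1)^m\) comes from \(t^m\). If the generating-function identity for \(\left\{\begin{smallmatrix}m\\n\end{smallmatrix}\right\}\) is not taken as known, we would prove it first. The route is to count surjections from an \(m\)-set onto an \(n\)-set, of which there are \(n!\left\{\begin{smallmatrix}m\\n\end{smallmatrix}\right\}\), and note that their exponential generating function is \((e^t-1)^n\), because it is the \(n\)-fold product of the exponential generating function \(e^t-1\) for nonempty sets. As a sanity check, the formula gives \(P_1(X)=X\), and the coefficient of \(Y\) in \(-\log(1-XU)\) is also \(X\), since \(U=Y+O(Y^2)\).
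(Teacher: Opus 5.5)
Your proposal is correct and follows essentially the same route as the paper's proof: you expand \(-\log(1-XU)\) as \(\sum_{n\geq1} X^nU^n/n\) and apply the Stirling exponential generating function \((e^t-1)^n=n!\sum_{m\geq n}\left\{\begin{smallmatrix}m\\n\end{smallmatrix}\right\}t^m/m!\) with \(t=-Y\). You spell out more carefully than the paper the sign bookkeeping \((-1)^{m+n}\), the well-definedness of the formal logarithm, and the finiteness that justifies collecting coefficients by \(m\).
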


\begin{proof}
Expand the logarithm as
\[
 \sum_{n\geq1}\frac{X^n}{n}(1-e^{-Y})^n.
\]
The standard exponential generating function
\[
 (e^T-1)^n
 =n!\sum_{m\geq n}
 \left\{\begin{matrix}m\\n\end{matrix}\right\}
 \frac{T^m}{m!}
\]
with \(T=-Y\) gives the stated coefficient.
\end{proof}

Choose a root of \(z^2-z+1=0\) in \(\F_p\), put
\[
 \bar z=1-z=z^{-1},\qquad
 h=\frac z{1+z},\qquad
 \bar h=1-h,
\]
and note that
\begin{equation}
\label{eq:h-relation}
 (2h-1)^2=-\frac13,\qquad
 z-\bar z=3(2h-1).
\end{equation}

For even \(j\), \(2\leq j\leq p-3\), define the divided twisted
eigenvalue
\begin{equation}
\label{eq:bj}
 b_j=
 \frac1p
 \sum_{\substack{1\leq a<3p\\(a,3p)=1}}
 a\,\chi(a)\widetilde a^{-j}
 \pmod p,
\end{equation}
where \(\widetilde a\) is the Teichm\"uller lift of \(a\bmod p\)
to \((\Z/p^2\Z)^\times\).

\begin{lemma}[Finite form of the twisted eigenvalue]
\label{lem:finite-b}
For even \(j\), \(2\leq j\leq p-3\),
\[
 \boxed{\qquad
 b_j=
 3\sum_{\substack{1\leq r<p\\r\equiv2\ ({\rm mod}\ 3)}}r^{-j}
 \pmod p.
 \qquad}
\]
\end{lemma}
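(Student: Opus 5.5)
The plan is to evaluate the defining sum \eqref{eq:bj} directly, by splitting each \(a\) according to its residue modulo \(p\). This shows at once that the sum is divisible by \(p\), and the quotient can then be computed modulo \(p\).

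First I fix the parametrisation. Write each \(a\) with \(1\le a<3p\) uniquely as \(a=r+pt\), where \(1\le r\le p-1\) and \(t\in\{0,1,2\}\); the values \(a\equiv 0\pmod p\) are excluded by \((a,3p)=1\). Because the Teichm\"uller lift depends only on \(a\bmod p\), we have \(\widetilde a=\widetilde r\). Because \(p\equiv1\pmod 3\), we have \(a\equiv r+t\pmod 3\), and hence \(\chi(a)=\chi(r+t)\). The condition \(3\nmid a\) is enforced automatically by \(\chi(0)=0\), so I may sum over all three values of \(t\). Working in \(\Z/p^2\Z\), the full sum becomes
\[
 \sum_{r=1}^{p-1}\widetilde r^{-j}\Bigl(r\sum_{t=0}^{2}\chi(r+t)
 +p\sum_{t=0}^{2}t\,\chi(r+t)\Bigr).
\]

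Next I use that, for fixed \(r\), the values \(r+t\) with \(t=0,1,2\) run over all residues modulo \(3\). Therefore \(\sum_t\chi(r+t)=0\), and the first term vanishes identically. The whole sum is thus \(p\) times an integer combination. Dividing by \(p\) and reducing modulo \(p\), where \(\widetilde r^{-j}\equiv r^{-j}\), gives
\[
 b_j\equiv\sum_{r=1}^{p-1}r^{-j}S(r),\qquad S(r)=\sum_{t=0}^{2}t\,\chi(r+t).
\]
This division is legitimate because the sum is a well-defined element of \(\Z/p^2\Z\) that lies in \(p\Z/p^2\Z\).

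A case check on \(r\bmod 3\), using \(\chi(1)=1\) and \(\chi(2)=-1\), gives the following values:
\begin{align*}
 r\equiv0\ (\mathrm{mod}\ 3)&:\quad S(r)=1\cdot\chi(1)+2\cdot\chi(2)=-1,\\
 r\equiv1\ (\mathrm{mod}\ 3)&:\quad S(r)=1\cdot\chi(2)+2\cdot\chi(0)=-1,\\
 r\equiv2\ (\mathrm{mod}\ 3)&:\quad S(r)=1\cdot\chi(0)+2\cdot\chi(1)=2.
\end{align*}
In every case \(S(r)=-1+3\cdot[r\equiv2\ (\mathrm{mod}\ 3)]\). Hence
\[
 b_j\equiv-\sum_{r=1}^{p-1}r^{-j}+3\sum_{\substack{1\le r<p\\ r\equiv2\ (\mathrm{mod}\ 3)}}r^{-j}\pmod p .
\]

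Finally, the power sum \(\sum_{r=1}^{p-1}r^{-j}\) vanishes modulo \(p\), because \(p-1\nmid j\) when \(2\le j\le p-3\). This leaves exactly the boxed formula.

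I expect no real obstacle. The only points needing care are the bookkeeping that \(\chi(r+pt)=\chi(r+t)\), which is where \(p\equiv1\pmod 3\) is used, and the check that the \(O(1)\) part cancels exactly in \(\Z/p^2\Z\) before dividing by \(p\).
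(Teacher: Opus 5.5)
Your proposal is correct and follows essentially the same route as the paper's proof. Both group the terms by \(r\equiv a\pmod p\) via \(a=r+pt\) and cancel the \(O(1)\) part because the three lifts cover every residue class mod \(3\). Both then divide by \(p\) and remove \(\sum_{r}r^{-j}\) using \(p-1\nmid j\); your explicit weight \(S(r)=-1+3\cdot[r\equiv2\pmod 3]\) just fills in the bookkeeping that the paper's two-line proof leaves implicit.
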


\begin{proof}
Group the terms in \eqref{eq:bj} by their common residue
\(r\bmod p\).  Among \(r,r+p,r+2p\), one is divisible by \(3\)
and the other two have opposite \(\chi\)-values.  Dividing the
resulting sum by \(p\), and using
\[
 \sum_{r=1}^{p-1}r^{-j}=0
 \]
for \(2\leq j\leq p-3\), gives the displayed residue-class sum.
\end{proof}

\begin{theorem}[Reflected-unit/twisted-eigenvalue identity]
\label{thm:reflection}
Let \(j\) be even with \(2\leq j\leq p-3\), and put \(k=p-j\).
Then
\[
 \boxed{\qquad
 P_k(h)-P_k(1-h)
 =-(2h-1)(j-1)!\,b_j
 \quad\text{in }\F_p.
 \qquad}
\]
Consequently,
\[
 P_k(h)-P_k(1-h)=0
 \quad\Longleftrightarrow\quad
 b_j=0.
\]
\end{theorem}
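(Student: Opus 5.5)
The plan is to read the identity off the generating function of Proposition~\ref{prop:generating}, specialised at \(X=h\) and \(X=1-h\). From \(h=z/(1+z)\) and \(1-h=1/(1+z)=\bar z/(1+\bar z)\) (using \(\bar z=z^{-1}\)) we get
\[
 1-h(1-e^{-Y})=\frac{1+z e^{-Y}}{1+z},\qquad
 1-(1-h)(1-e^{-Y})=\frac{1+\bar z e^{-Y}}{1+\bar z}.
\]
Hence
\[
 \sum_{m\ge1}\bigl(P_m(h)-P_m(1-h)\bigr)Y^m
 =-\log\frac{1+z e^{-Y}}{1+\bar z e^{-Y}}+\text{const}.
\]
This is exactly the logarithm of the reflected unit \(\mu_p\) with \(\eta\) replaced by \(e^{-Y}\), which explains the name of the theorem. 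So \(P_k(h)-P_k(1-h)\) is the \(Y^k\)-coefficient of \(-\log(1+ze^{-Y})+\log(1+\bar z e^{-Y})\). Since \(k<p\), all factorials involved are units in \(\F_p\), so the computation may be done over \(\Z_{(p)}[\zeta_3]\) and then reduced.

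Next I would compute that coefficient through polylogarithms at cube roots of unity. Write \(x=-z\), which is a primitive cube root of unity (in characteristic \(0\), \(x=\zeta_3^2\)); then \(-\bar z=\bar x\). Expanding \(-\log(1-xe^{-Y})=\sum_n x^n e^{-nY}/n\) formally, the \(Y^k\)-coefficient is \(\frac{(-1)^k}{k!}\,\mathrm{Li}_{1-k}(x)\), where \(\mathrm{Li}_{-s}(x)=\sum n^s x^n\) is understood as the usual rational function. For \(x^3=1\), \(x\ne1\), the Hurwitz decomposition gives
\[
 \mathrm{Li}_{-s}(x)=-\frac{3^s}{s+1}\sum_{r=1}^{3}x^rB_{s+1}(r/3).
\]
In the difference \(\mathrm{Li}_{1-k}(x)-\mathrm{Li}_{1-k}(\bar x)\), the factor \(x^r-\bar x^r\) equals \(\chi(r)(x-\bar x)\). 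By \eqref{eq:h-relation}, \(x-\bar x=-(z-\bar z)=-3(2h-1)\), so the factor \((2h-1)\) appears. What remains is a multiple of \(3^{k-1}k^{-1}\sum_{r}\chi(r)B_k(r/3)\), i.e.\ of the twisted Bernoulli number \(B_{k,\chi}/k\).

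To convert this into \(b_j\) entirely in \(\F_p\), without invoking Kummer congruences, I would use Faulhaber on each residue class \(r\bmod 3\). For \(r\in\{1,2\}\), summing \((3n+r)^{k-1}\) over \(3n+r<p\) gives \(3^{k-1}k^{-1}\bigl(B_k(r'/3+p/3)-B_k(r/3)\bigr)\) with \(r'\equiv r+p\cdot(\ldots)\) the complementary class. Taylor-expanding in \(p/3\) and noting that \(B_{k-1},B_{k-2},\dots\) are \(p\)-integral (because \(p-1\nmid k-1\) for \(k<p\) odd), one obtains modulo \(p\) a linear relation between \(\sum_{n<p,\,n\equiv r(3)}n^{k-1}\) and the values \(B_k(r/3)\). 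Since \(n^{k-1}=n^{p-1-j}\equiv n^{-j}\), these residue-class power sums are exactly those appearing in Lemma~\ref{lem:finite-b}. Using \(\sum_{n=1}^{p-1}n^{-j}=0\) to eliminate the \(r\equiv1\) class leaves a multiple of \(\sum_{r\equiv2(3)}r^{-j}=b_j/3\).

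Finally, the scalar must be collected. It consists of \(\frac{(-1)^k}{k!}\), the \(1/k\) from Faulhaber, the powers of \(3\), and the sign from \(x-\bar x\). Wilson's theorem turns \(1/k!=1/(p-j)!\) into \(\pm(j-1)!\), via \((p-j)!\,(j-1)!\equiv(-1)^{j}(p-1)!\equiv-(-1)^j\) up to the exact sign, which I will verify. The factor \(3^{k-1}=3^{-j}\cdot 3^{p-1}\equiv3^{-j}\) must cancel against the powers of \(3\) coming from Faulhaber, and \((2h-1)^2=-1/3\) may be needed to absorb a stray \(3\).

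I expect the main obstacle to be this bookkeeping: the Faulhaber boundary terms modulo \(p\), with the shift by \(p/3\) in the Bernoulli polynomial argument and the identification of which residue class \(r'\) arises, together with the precise sign and power of \(3\) that must yield exactly \(-(2h-1)(j-1)!\,b_j\). If the shift argument becomes messy, I would instead reduce \(\frac{(-1)^k}{k!}\mathrm{Li}_{1-k}(x)\) modulo \(p\) directly, writing it as a finite sum \(\sum_{n=1}^{3p}n^{k-1}x^n\) divided by \(1-x^{3p}\) after a \(p\)-adic limiting argument. The equivalence with \(b_j=0\) is then immediate, since \(2h-1\neq0\) by \eqref{eq:h-relation} and \((j-1)!\) is a unit.
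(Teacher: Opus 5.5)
Your argument is correct. It shares the paper's outer frame: specialise Proposition~\ref{prop:generating} at $h$ and $1-h$, extract the $Y^k$-coefficient of $C_k=P_k(h)-P_k(1-h)$, land on the residue sum of Lemma~\ref{lem:finite-b}, and finish with Wilson.

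The two routes differ in the middle step.
\begin{itemize}
\item The paper differentiates to get $(z-\bar z)\frac{e^{-Y}-e^{-2Y}}{1-e^{-3Y}}=(z-\bar z)\sum_{m\ge1}\chi(m)e^{-mY}$. It then simply asserts that the $Y^{k-1}$-coefficient is, modulo $p$, a finite $\chi$-weighted residue sum. The mechanism is left implicit; for instance, one can regroup with period $3p$, and then only $\frac1{3p}\sum_{m\le 3p}\chi(m)m^k$ survives modulo $p$. No conductor-$3$ Bernoulli number appears.
\item You obtain the exact characteristic-zero formula $C_k=\frac{(-1)^k}{k\cdot k!}(z-\bar z)B_{k,\chi}$ from the Hurwitz decomposition. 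You then prove $B_{k,\chi}/k\equiv\sum_{n<p,\,n\equiv2\,(3)}n^{k-1}\pmod p$ by Faulhaber and a Taylor shift.
\end{itemize}
Your route costs more bookkeeping, but it spells out exactly the step the paper elides. Combined with Lemma~\ref{lem:finite-b}, it also gives an elementary proof of the Kummer congruence $B_{1,\chi\omega^{-j}}\equiv B_{k,\chi}/k\pmod p$, which links the conductor-$3p$ and conductor-$3$ values. Your fallback is essentially the paper's route, but as written it divides by $1-x^{3p}=0$. The period-$3p$ regrouping has to be done in the variable $Y$, before specialising.

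Three pieces of bookkeeping need fixing.
\begin{itemize}
\item \emph{Wilson sign.} As written it is wrong. From $\prod_{t=1}^{j-1}(p-t)\equiv(-1)^{j-1}(j-1)!$ you get $(p-j)!\,(j-1)!\equiv(-1)^{j-1}(p-1)!\equiv(-1)^j=1$. Your version $(-1)^j(p-1)!$ equals $-1$ and would flip the sign of the final identity.
\item \emph{No elimination via $\sum n^{-j}=0$ is needed.} Since $p\equiv1\pmod 3$, the class $n\equiv2$ has $(p-1)/3$ terms and Faulhaber's endpoint is $\frac{p+1}{3}=\frac13+\frac p3$. That class alone gives $\frac{3^{k-1}}{k}\bigl(B_k(\tfrac13)-B_k(\tfrac23)\bigr)=B_{k,\chi}/k$ modulo $p$. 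The powers of $3$ cancel exactly, the $3$ in $b_j=3\sum_{r\equiv2}r^{-j}$ cancels against $z-\bar z=3(2h-1)$, and $(2h-1)^2=-\frac13$ is never used.
\item \emph{Why the Taylor terms vanish modulo $p$.} The reason is that every index involved is at most $p-2$. Hence all $B_m$, and so all $B_{k-i}(\tfrac13)$, are $p$-integral by von Staudt--Clausen; $p-1\nmid k-1$ alone would not suffice.
\end{itemize}
With these corrections you get $C_k\equiv-(2h-1)\,b_j/k!\equiv-(2h-1)(j-1)!\,b_j$, as claimed.
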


\begin{proof}
Put
\[
 C_m=P_m(h)-P_m(1-h).
\]
Proposition~\ref{prop:generating} gives
\[
 \sum_{m\geq1}C_mY^m
 =-\log\frac{1+ze^{-Y}}{z+e^{-Y}}.
\]
Differentiating and using \(z\bar z=1\) gives
\begin{equation}
\label{eq:derivative-spectrum}
 \frac d{dY}\sum_{m\geq1}C_mY^m
 =(z-\bar z)\frac{e^{-Y}-e^{-2Y}}{1-e^{-3Y}}.
\end{equation}
The right side is the exponential generating series for the
residue-class difference selected by \(\chi_{-3}\).  Extract the
coefficient of \(Y^{p-j-1}\), replace the resulting finite residue
sum by Lemma~\ref{lem:finite-b}, and use
\[
 (p-j)!(j-1)!\equiv1\pmod p
\]
for even \(j\).  Since \(z-\bar z=3(2h-1)\), the result is
\[
 C_{p-j}=-(2h-1)(j-1)!\,b_j.
\]
The remaining scalar is nonzero because \(2h-1\neq0\) and \(j<p\).
\end{proof}

\begin{remark}
The quantity \(b_j\) is the primitive divided
\(\chi_{-3}\omega^j\)-eigenvalue of the conductor-\(3p\)
Stickelberger operator.  Theorem~\ref{thm:reflection} identifies
the complete local anti-spectrum with the twisted Bernoulli
spectrum, not merely their zero sets.
\end{remark}

There is also a useful comparison between the two orientations
that led to this unit.

\begin{proposition}[One reflected line]
\label{prop:one-line}
For
\[
 S_m=P_m(h)-P_m(1-h),
\qquad
 E_m=P_m(z)-P_m(1-z),
\]
one has
\[
 \boxed{\qquad E_m=(1+2^m)S_m\qquad(m\geq1).}
\]
\end{proposition}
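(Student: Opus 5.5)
The plan is to compare the two generating series from Proposition~\ref{prop:generating} and show that the series for \(E_m\) is the series for \(S_m\) evaluated at \(Y\) plus the same series evaluated at \(2Y\). Extracting the coefficient of \(Y^m\) then gives \(E_m=(1+2^m)S_m\). To avoid denominators \(m!\) for \(m\ge p\), I would first prove the identity in characteristic zero. There I take \(z\) to be a root of \(X^2-X+1\) in \(\Q(\zeta_3)\), so \(z\) is a primitive sixth root of unity with \(\bar z=1-z=z^{-1}\). I would then reduce modulo a prime above \(p\) wherever \(P_m\) is \(p\)-integral.

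Put \(u=e^{-Y}\), so \(1-e^{-Y}=1-u\). For the \(S\)-series, use \(h=z/(1+z)\) and \(1-h=1/(1+z)\). This gives \(1-h(1-u)=(1+zu)/(1+z)\) and \(1-(1-h)(1-u)=(z+u)/(1+z)\). Hence
\[
 \sum_{m\ge1}S_mY^m=-\log\frac{1+zu}{z+u},
\]
in agreement with the proof of Theorem~\ref{thm:reflection}. For the \(E\)-series, \(1-z(1-u)=\bar z+zu\) and \(1-\bar z(1-u)=z+\bar z u\). Hence
\[
 \sum_{m\ge1}E_mY^m=-\log\frac{\bar z+zu}{z+\bar z u}.
\]
Each argument equals \(1\) at \(u=1\), so all logarithms are well-defined formal power series in \(Y\). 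Replacing \(Y\) by \(2Y\) replaces \(u\) by \(u^2\). It therefore suffices to prove the rational-function identity
\[
 \frac{(1+zu)(1+zu^2)}{(z+u)(z+u^2)}=\frac{\bar z+zu}{z+\bar z u}.
\]

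The key step, and the only real computation, is this identity. It follows from the relations \(z+\bar z=1\), \(z\bar z=1\), \(z^2=z-1=-\bar z\), and therefore \(1+z^2=z\). On the right I would write \(\bar z+zu=\bar z(1-\bar z u)\) and \(z+\bar z u=z(1-zu)\), then cross-multiply. The left cross-product is
\[
 z(1+zu)(1-zu)(1+zu^2)=z(1+\bar z u^2)(1+zu^2)=z(1+u^2+u^4),
\]
using \((1+zw)(1+\bar z w)=1+w+w^2\). The right cross-product is
\[
 \bar z(1-\bar z u)(z+u)(z+u^2)=(1+zu^2)(z+u^2)=z(1+u^2+u^4).
\]
Here \(\bar z(1-\bar z u)(z+u)=1-\bar z^2u^2=1+zu^2\), and \(1+z^2=z\) gives the last equality. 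The two sides agree, which is the sixth-cyclotomic factorisation \(1+u^2+u^4=(1+zu)(1-zu)(1+zu^2)\) in disguise.

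Taking logarithms and comparing coefficients of \(Y^m\) gives \(E_m=S_m+2^mS_m\) as elements of \(\Q(z)\), for every \(m\ge1\). The identity in \(\F_p\) follows by reduction whenever the \(P_m\) have \(p\)-integral coefficients, which is the range relevant to the paper. The main obstacle is only the bookkeeping of the cross-multiplication; the real idea is to recognise that the extra factor \(1+2^m\) comes from the substitution \(Y\mapsto 2Y\).
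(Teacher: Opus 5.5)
Your proposal is correct and is essentially the paper's argument. Both proofs reduce the claim to the series identity $\mathcal E(Y)=\mathcal S(Y)+\mathcal S(2Y)$, with the factor $1+2^m$ coming from $Y\mapsto 2Y$, and both ultimately rest on a cyclotomic factorisation of $1+u^2+u^4$ (where $u=e^{-Y}$). The only difference is where the identity is checked: the paper compares the logarithmic derivatives $\mathcal S'=(z-\bar z)\,u/(1+u+u^2)$ and $\mathcal E'=(z-\bar z)\,u/(1-u+u^2)$, whereas you verify the equivalent multiplicative identity before taking logarithms. Your explicit passage through $\Q(z)$, followed by reduction only when $m\le p-1$, is a welcome extra: the paper's proof uses the $m!$-denominators of $P_m$ and divides by $m$ after differentiating without comment, and both steps are only legitimate in characteristic zero or in that range.
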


\begin{proof}
Let \(\mathcal S(Y)=\sum S_mY^m\) and
\(\mathcal E(Y)=\sum E_mY^m\).  From
Proposition~\ref{prop:generating},
\[
 \mathcal S'(Y)
 =(z-\bar z)\frac{e^{-Y}}{1+e^{-Y}+e^{-2Y}},
\]
\[
 \mathcal E'(Y)
 =(z-\bar z)\frac{e^{-Y}}{1-e^{-Y}+e^{-2Y}}.
\]
Direct comparison gives
\[
 \mathcal E'(Y)=\mathcal S'(Y)+2\mathcal S'(2Y).
\]
Comparison of the coefficient of \(Y^{m-1}\) proves the result.
\end{proof}

\section{The complete catalogue below 500}

The finite formula in Lemma~\ref{lem:finite-b} makes an exhaustive
scan immediate.  The next theorem records both the enumeration and
the class-field conclusion; the latter is proved in
Sections~\ref{sec:local}--\ref{sec:main-conjecture}.

\begin{theorem}[Catalogue and explicit class components]
\label{thm:catalogue}
For primes \(p<500\), \(p\equiv1\pmod6\), the equality \(b_j=0\)
holds precisely for the following twelve lines:
\[
 \begin{array}{c@{\quad}r@{\quad}r@{\qquad}c@{\quad}r@{\quad}r}
 p&j&k&p&j&k\\ \toprule
 67&20&47&331&182&149\\
 103&10&93&337&130&207\\
 139&40&99&337&136&201\\
 199&38&161&409&348&61\\
 241&220&21&421&348&73\\
 271&182&89&457&362&95.
 \end{array}
\]
For every row, the extension
\[
 K_p(\widetilde\mu_k^{1/p})/K_p
\]
is the complete
\(\psi=\chi_{-3}\omega^j\) component of the Hilbert class field,
and
\[
 \boxed{\qquad
 \#\bigl(\Cl(K_p)\otimes\Z_p\bigr)_\psi=p.
 \qquad}
\]
Every displayed local zero and every generalized Bernoulli zero is
simple.
\end{theorem}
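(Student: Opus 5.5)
The proof has three parts: the enumeration, the construction and nontriviality of the radicals, and the order count. For the enumeration I would use Lemma~\ref{lem:finite-b}. For each prime \(p<500\) with \(p\equiv1\pmod6\) and each even \(j\) with \(2\le j\le p-3\), compute
\[
 b_j=3\sum_{r\equiv2\,(3),\ r<p}r^{-j}\pmod p
\]
in integer arithmetic. There are only about forty primes and at most \(250\) values of \(j\) per prime, so this is a small deterministic computation. It should return exactly the twelve pairs \((p,j)\) in the table. By Theorem~\ref{thm:reflection} these are also exactly the zeros of \(P_k(h)-P_k(1-h)\), and since the scalar \(-(2h-1)(j-1)!\) is a unit, the local and Bernoulli zero sets coincide row by row.

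For the radical, Proposition~\ref{prop:integral-projector} gives \([\widetilde\mu_k]\) in the \(\theta\)-eigenspace of \(K_p^\times/K_p^{\times p}\). Kummer duality then puts the character of \(K_p(\widetilde\mu_k^{1/p})/K_p\) on the \(\omega\theta^{-1}=\psi\) line. To show this extension is unramified I would argue as follows.
\begin{itemize}
\item Away from \(p\) there is nothing to check, because \(\widetilde\mu_k\) is a unit and \(p\) is prime to any other residue characteristic.
\item At the unique prime \(\mathfrak p=(1-\zeta_p)\) above \(p\) in each \(\Q(\zeta_3)\)-decomposition factor, I would expand \(\log\mu_p\) in the local variable \(\pi=\zeta_p-1\). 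The generating function of Proposition~\ref{prop:generating}, evaluated with \(e^{-Y}\) playing the role of \(\zeta_p\), shows that the \(\theta=\chi\omega^k\) component of the expansion has leading coefficient proportional to \(C_k=P_k(h)-P_k(1-h)\).
\item When \(C_k=0\), the projected unit is congruent to a \(p\)-th power modulo \(\mathfrak p^{p}\). That is exactly the Hecke criterion for the Kummer extension to be unramified at \(\mathfrak p\).
\end{itemize}
Getting the normalisations right here, and making sure the \(\theta\)-projection isolates a single coefficient in degree \(k<p\) with no interference from degrees \(\ge p\), is the delicate local step.

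For global nontriviality I would take, for each row, a small prime \(\ell\equiv1\pmod{3p}\), which splits completely in \(K_p\). Fixing a prime \(\lambda\mid\ell\), I would reduce \(\widetilde\mu_k\) into \(\F_\ell^\times\) and check that \(\widetilde\mu_k^{(\ell-1)/p}\not\equiv1\pmod\lambda\). This proves \(\widetilde\mu_k\notin K_p^{\times p}\). It also gives a Frobenius that acts nontrivially, which serves as a finite Artin certificate. The extension is therefore a nontrivial unramified degree-\(p\) extension inside the \(\psi\)-part of the Hilbert class field, and so \(\#(\Cl(K_p)\otimes\Z_p)_\psi\ge p\).

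For the upper bound I would use the character-wise Main Conjecture (Mazur--Wiles), which for odd \(\psi\neq\omega\) gives \(\#(\Cl\otimes\Z_p)_\psi=p^{v_p(B_{1,\psi^{-1}})}\) up to units. Since \(b_j\) is the divided eigenvalue, \(b_j=0\) means \(p^2\) divides the undivided Stickelberger sum. So I need the exact valuation \(v_p(B_{1,\psi^{-1}})=1\), and this is the step I expect to be the main obstacle. I would try to get it from a second-order computation: run the Teichmüller sum \eqref{eq:bj} modulo \(p^3\), or equivalently compute the next Taylor coefficient of the local expansion, and check per row that the next divided quantity is nonzero mod \(p\). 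This is a finite integer computation, and it simultaneously shows that the local zero is simple. The Main Conjecture then gives order exactly \(p\), so the radical extension found above is the entire \(\psi\)-component.
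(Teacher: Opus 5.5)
Your overall route is the paper's: Lemma~\ref{lem:finite-b} for the enumeration, the generating-function coefficient $P_k(h)-P_k(1-h)$ for the local condition, residue symbols at completely split primes for nontriviality, and Mazur--Wiles with $v_p(B_{1,\psi^{-1}})=1$ from a Teichm\"uller sum modulo $p^3$ for the upper bound. Your Hecke formulation (congruence to a $p$-th power modulo $\mathfrak p^{p}$) gives unramifiedness rather than the paper's local $p$-th power. On the $\omega^k$-line with $2\le k\le p-2$ the two coincide, so this difference is harmless.

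The gap is in the second-order step. You say that running the Teichm\"uller sum modulo $p^3$ is ``equivalent'' to computing the next coefficient of the local expansion. You also say that one such computation ``simultaneously'' shows the local zero is simple. Nothing supports either claim. Theorem~\ref{thm:reflection} is an identity in $\F_p$ only; its proof uses $(p-j)!(j-1)!\equiv1$ and $\sum_r r^{-j}\equiv0$ modulo $p$. It therefore carries no information about the next $p$-adic digit. The paper's own data show that the first-order scalar does not persist. For $p=67$ one has $h\equiv55$ and $j-1=19$, so the scalar is $-(2h-1)\,19!\equiv61\pmod{67}$. Then $61\cdot(T_{20}/67^2)\equiv61\cdot33\equiv3$, whereas $S_{47}/67\equiv4$ by Proposition~\ref{prop:p67-local}.

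So the two simplicity statements are independent finite checks. That is why Proposition~\ref{prop:digits} carries separate $S_k/p$ and $T_j/p^2$ columns. The confusion costs you something on either branch:
\begin{itemize}
\item If you take the local branch, you obtain $v_p(S_k)=1$ but no bound on $v_p(B_{1,\psi^{-1}})$. The Main Conjecture then does not give order exactly $p$.
\item If you take the Teichm\"uller branch, which is the right one for the class-group order, you still have not proved the theorem's last clause that every displayed local zero is simple.
\end{itemize}
For that clause you must separately Hensel-lift $z$ to $\Z/p^2\Z$, evaluate $P_k(h)-P_k(1-h)$ modulo $p^2$, and check that the divided digit is nonzero. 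With both computations in place, your argument coincides with the paper's.
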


The two rows at \(p=337\) are distinct character components.
Seven rows occur at classically regular primes:
\[
 139,\quad199,\quad241,\quad331,\quad
 337\text{ (twice)},\quad457.
\]
The remaining rows occur at the ordinarily irregular primes
\[
 67,\quad103,\quad271,\quad409,\quad421.
\]
Ordinary regularity and \(\chi_{-3}\)-twisted nondegeneracy are
therefore independent layers.  In the range \(p<500\), most of the
twisted lines are invisible to the ordinary Bernoulli test.

\section{Second-order digits}

For every row of Theorem~\ref{thm:catalogue}, Hensel lift the
smaller root of \(z^2-z+1\) to \(\Z/p^2\Z\), put
\[
 h=\frac z{1+z},
\qquad
 S_k=P_k(h)-P_k(1-h),
\]
and define, using Teichm\"uller lifts modulo \(p^3\),
\[
 T_j=
 \sum_{\substack{1\leq a<3p\\(a,3p)=1}}
 a\,\chi(a)\widehat a^{-j}
 \pmod {p^3}.
\]

\begin{proposition}[Exact divided digits]
\label{prop:digits}
The exact divided values are
\[
\begin{array}{c@{\;}r@{\;}r@{\;}r@{\;}r@{\qquad}c@{\;}r@{\;}r@{\;}r@{\;}r}
p&j&S_k/p&T_j/p^2&B/p&p&j&S_k/p&T_j/p^2&B/p\\ \toprule
67&20&4&33&11&331&182&96&98&143\\
103&10&61&12&4&337&130&318&140&159\\
139&40&75&64&114&337&136&295&147&49\\
199&38&147&149&116&409&348&207&225&75\\
241&220&210&210&70&421&348&184&344&255\\
271&182&129&29&100&457&362&134&195&65.
\end{array}
\]
All entries are reduced modulo \(p\), and
\[
 B=B_{1,\psi^{-1}}=\frac{T_j}{3p}.
\]
In particular,
\[
 v_p(S_k)=1,\qquad
 v_p(T_j)=2,\qquad
 v_p(B_{1,\psi^{-1}})=1.
\]
\end{proposition}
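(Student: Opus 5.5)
This proposition is a finite computation, so the plan is to verify it by exact integer arithmetic. The only structural input is the justification of the identity $B=T_j/(3p)$ and of the valuation consequences.

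\emph{Step 1: the identity for $B$.} I would recall that for a primitive character $\varphi$ of conductor $f$ one has
\[
 B_{1,\varphi}=\frac1f\sum_{a=1}^{f}\varphi(a)\,a .
\]
For $\varphi=\psi^{-1}=\chi_{-3}\omega^{-j}$ we have conductor $f=3p$: the character $\omega^{-j}$ is nontrivial mod $p$ because $2\le j\le p-3$, and $\chi_{-3}$ has conductor $3$. Realising $\omega$ $p$-adically through Teichm\"uller lifts, the sum is exactly $T_j/(3p)$ up to the error introduced by truncating the Teichm\"uller lift modulo $p^3$. That error is divisible by $p^3$, so $B\equiv T_j/(3p)\pmod{p^2}$, which suffices to read off $B/p \bmod p$. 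Since $p\ne3$, this gives $v_p(B)=v_p(T_j)-1$. Consequently the claims $v_p(T_j)=2$ and $v_p(B_{1,\psi^{-1}})=1$ are equivalent, and both follow once the displayed value of $T_j/p^2$ is nonzero mod $p$.

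\emph{Step 2: compute $T_j$.} For each row I would compute $\widehat a=a^{p^2}\bmod p^3$, which is the Teichm\"uller lift mod $p^3$, and then evaluate $T_j$ as an integer sum mod $p^3$. I would check that $p^2\mid T_j$ and record $T_j/p^2 \bmod p$. The first-order vanishing $p\mid T_j/p$ is just $b_j=0$, which is already guaranteed by Theorem~\ref{thm:catalogue} through \eqref{eq:bj}. The new content is the second digit.

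\emph{Step 3: compute $S_k$.} I would Hensel-lift the root $z$ of $z^2-z+1$ to $\Z/p^2\Z$; the lift exists because the discriminant is $-3$, which is a nonzero square mod $p$. Then I would form $h$ and evaluate $P_k(h)-P_k(1-h)$ mod $p^2$.

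The subtle point is that $P_k$ has denominators: the factor $k!$ and the factors $(n-1)!/k!$ are not $p$-integral in general once $k\ge p$. Here, however, $k<p$, so $1/k!$ and all coefficients are $p$-units, and the evaluation mod $p^2$ is legitimate. To keep the cost down, I would compute the Stirling numbers mod $p^2$ by the usual recurrence, or equivalently extract coefficients from the generating function of Proposition~\ref{prop:generating}. Theorem~\ref{thm:reflection} gives $p\mid S_k$, and I would record $S_k/p\bmod p$.

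\emph{Step 4: nonvanishing.} Finally I would check that every tabulated entry is nonzero mod $p$, which yields the three valuation statements.

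The main obstacle is not conceptual but lies in guaranteeing that the second digits are well defined. For $S_k$, the value depends on the chosen lift of $z$, which is why the smaller root is specified. For $T_j$, one must confirm that the mod-$p^3$ Teichm\"uller truncation does not affect the digit at $p^2$. I would settle this by the congruence argument in Step 1 rather than by trusting the computation. As an independent cross-check on the twelve $B/p$ values, I would also compare them against the finite sum of Lemma~\ref{lem:finite-b} lifted to one more $p$-adic digit.
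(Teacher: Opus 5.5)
Your proposal is correct and is essentially the paper's own argument. The paper likewise evaluates $P_k(h)-P_k(1-h)$ modulo $p^2$ and the Teichm\"uller sum $T_j$ modulo $p^3$, observes that the divided digits are nonzero, and passes to $B$ by dividing by $3p$. Your checks that $B\equiv T_j/(3p)\pmod{p^2}$, that $\widehat a=a^{p^2}\bmod p^3$, and that $P_k$ has $p$-integral coefficients because $k<p$ make explicit what the paper leaves implicit.

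One small correction: a simple root has a unique Hensel lift, so the only real choice is which root of $z^2-z+1$ modulo $p$ to lift. Switching from $z$ to $\bar z$ replaces $h$ by $1-h$, which negates $S_k/p$ but leaves $v_p(S_k)$ unchanged.
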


\begin{proof}
The \(S_k\)-column is the direct evaluation of the finite
Stirling polynomial modulo \(p^2\).  The \(T_j\)-column is the
finite sum above in \(\Z/p^3\Z\).  Every displayed divided digit is
nonzero.  Division by \(3p\) lowers the valuation by one and
multiplies the last digit by \(3^{-1}\pmod p\).
\end{proof}

\section{The local Kummer criterion}
\label{sec:local}

We now prove that a vanishing first coefficient makes the projected
unit a local \(p\)-th power at the conductor primes.  This is the
step from the reflected spectrum to an unramified extension.

\begin{theorem}[One-coordinate local criterion]
\label{thm:local-criterion}
Let
\[
 F=\Q_p(\zeta_p),\qquad
 \pi=1-\zeta_p,\qquad
 \Delta=\Gal(F/\Q_p),
\]
and let \(U_1=1+\pi\mathcal O_F\).  Fix
\(2\leq m\leq p-2\).  Suppose that a unit \(u\in F^\times\),
modulo \(F^{\times p}\), lies entirely on the
\(\omega^m\)-line.  Remove its residue-field Teichm\"uller factor
and put
\[
 \ell_m(u)=
 [\pi^m]\!
 \left(
 \sum_{r=1}^{p-1}r^{-m}\sigma_r
 \right)\log u
 \quad\text{in }\F_p.
\]
Then
\[
 \boxed{\qquad
 u\in F^{\times p}
 \quad\Longleftrightarrow\quad
 \ell_m(u)=0.
 \qquad}
\]
\end{theorem}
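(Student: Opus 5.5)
We work with the standard structure of \(U_1/U_1^p\) as an \(\F_p[\Delta]\)-module. Write \(F^\times=\pi^{\Z}\times\mu_{p-1}\times U_1\). Since \(u\) is a unit, removing its Teichm\"uller factor leaves an element of \(U_1\), and that factor is a \((p-1)\)-th root of unity, hence a \(p\)-th power. So it suffices to treat \(u\in U_1\), and \(u\in F^{\times p}\) if and only if \(u\in U_1^p\). The argument has three steps: decompose \(U_1/U_1^p\) into \(\omega\)-eigenspaces, show that the \(\omega^m\)-eigenspace is one-dimensional and is read off at level \(\pi^m\), and identify \(\ell_m\) with the resulting coordinate.

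\emph{Step 1: eigenspace decomposition.} The filtration \(U_i=1+\pi^i\mathcal O_F\) has graded pieces \(U_i/U_{i+1}\cong\F_p\). Since \(\sigma_r(\pi)\equiv r\pi\pmod{\pi^2}\), the group \(\Delta\) acts on \(U_i/U_{i+1}\) through \(\omega^i\). Classically, \(U_1/U_1^p\) has \(\F_p\)-dimension \(p+1\), with basis the classes of \(1-\pi^i\) for \(1\le i\le p\) together with \(1-\zeta_p\pi^{p}\)-type correction. The \(\omega^i\)-eigenspace for \(2\le i\le p-2\) is one-dimensional and is detected at filtration level \(i\). Indeed, \(U_{p+1}\subset U_1^p\), and the \(p\)-th power map sends level \(i\) to level \(i+p-1\) modulo higher terms, so only level \(p\) carries the extra Kummer/\(\omega^1\) dimension. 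Levels \(1\) and \(p-1\) give the \(\omega^1\) and \(\omega^0\) parts. Hence for \(2\le m\le p-2\), an element of the \(\omega^m\)-line is a \(p\)-th power if and only if its leading coefficient at level \(\pi^m\) vanishes, after projecting by \(e_{\omega^m}\).

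\emph{Step 2: identify \(\ell_m\) with that coordinate.} Apply \(\log\), which gives an isomorphism \(U_1\to\pi\mathcal O_F\) on the relevant range, with \(\log(1+x)\equiv x\) modulo higher order on the first graded pieces below level \(p\). The operator \(\sum_r r^{-m}\sigma_r\) equals \((p-1)e_{\omega^m}\) modulo \(p\), a unit multiple of the projector. On \(\pi\mathcal O_F\) written in the basis \(\pi^i\), we have \(\sigma_r\pi^i\equiv r^i\pi^i\) modulo higher powers. So \(e_{\omega^m}\log u\) has the shape \(\lambda\pi^m+(\text{terms of order}\ \ge m+p-1)\). Taking \([\pi^m]\) extracts \(\lambda\bmod p\), which is the leading coordinate of Step 1. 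Therefore \(\ell_m(u)=0\) if and only if \(e_{\omega^m}\log u\in p\mathcal O_F=\pi^{p-1}\mathcal O_F\). Since \(u\) lies on the \(\omega^m\)-line modulo \(p\)-th powers, \(u\) agrees with \(e_{\omega^m}u\) up to \(U_1^p\).

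\emph{Step 3: conclude.} If \(u=v^p\), then \(\log u=p\log v\), so \(\ell_m(u)=0\). Conversely, if \(\lambda=0\), then \(e_{\omega^m}u\) lies in \(U_{m+1}\) inside the \(\omega^m\)-eigenspace. The next possible level carrying \(\omega^m\) is \(m+p-1>p\), so the element lies in \(U_{p+1}\subset U_1^p\).

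\emph{Main obstacle.} The delicate point is making "\([\pi^m]\)" well defined. The coefficient must be independent of how \(\log u\) is expanded in \(\pi\), modulo the ideal generated by \(p\) and higher powers. This needs care because \(\pi^{p-1}=-p\cdot(\text{unit})\) mixes levels. I would handle it by using \(\mathcal O_F=\Z_p[\pi]\) and noting that the \(\omega^m\)-isotypic part of \(\mathcal O_F/p\) is spanned by the class of \(\pi^m\) alone, because \(\omega^m\neq\omega^{m+p-1}\) fails only trivially. I would also check that the hypotheses \(m\neq0,1,p-1\) exclude the levels where \(\log\) fails to be injective modulo \(p\): the root-of-unity kernel at \(\omega^1\), and level \(p\).
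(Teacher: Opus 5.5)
Your proof is correct and rests on the same three facts as the paper's. First, the level-\(i\) graded piece carries \(\omega^i\). Second, an \(\omega^m\)-eigenvector with \(2\le m\le p-2\) has leading level \(m\) or at least \(m+p-1\). Third, \(\sum_r r^{-m}\sigma_r\equiv(p-1)e_{\omega^m}\) modulo \(p\).

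The difference is where the \(p\)-th-power test is made. The paper works additively. For \(m\ne1\), \(\log\) is an isomorphism \(e_mU_1\to M_m=e_m\pi^2\mathcal O_F\) onto a rank-one \(\Z_p\)-lattice, \(p\)-th powers become \(pM_m\), and the leading coefficient identifies \(M_m/pM_m\) with \(\F_p\). This uses the full isomorphism, which is where \(m\ne1\) (i.e.\ the kernel \(\mu_p\)) enters, but needs no description of \(U_1^p\).

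You stay multiplicative. You use \(U_{p+1}\subseteq U_1^p\) and the unit filtration to see that a nontrivial \(\omega^m\)-class is detected at level \(m\). You need \(\log\) only through \(\log(1+x)\equiv x\pmod{\pi^{m+1}}\) for \(v_\pi(x)=m\ge2\). You also pin down \([\pi^m]\) as the coefficient in the basis \(1,\pi,\dots,\pi^{p-2}\) of \(\mathcal O_F=\Z_p[\pi]\), reduced mod \(p\). That choice is exactly what makes the discrepancy \(p(\cdots)\log u\) between \(\sum_r r^{-m}\sigma_r\) and \((p-1)e_{\omega^m}\) invisible; the paper leaves this implicit.

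Some side remarks should be corrected, though none is load-bearing.

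\(U_1/U_1^p\) has dimension \(p\), not \(p+1\). Since \((1+a\pi)^p\equiv1+(a^p-a)\pi^p\equiv1\pmod{\pi^{p+1}}\), one has \(U_1^p=U_{p+1}\). So the classes of \(1-\pi^i\) for \(1\le i\le p\) already form a basis, with no extra ``correction'' class. The \(\omega\)-part is two-dimensional, coming from levels \(1\) and \(p\). The number \(p+1\) is the dimension of \(F^\times/F^{\times p}\).

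The congruence \(\log(1+x)\equiv x\) to leading order holds at every level \(\ge2\). It fails at level \(1\), where the leading term cancels (this is the \(\mu_p\)-kernel). So ``below level \(p\)'' is the wrong qualifier.

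Finally, \(e_{\omega^m}\log u\) is not \(\lambda\pi^m\) plus terms of order \(\ge m+p-1\) in its \(\pi\)-expansion. What holds is that its valuation is \(m\) or at least \(m+p-1\), and that every element of \(\pi^{m+1}\mathcal O_F\) has \(\pi^m\)-coefficient in \(p\Z_p\). That is all your extraction of \(\lambda\) needs.
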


\begin{proof}
Let \(e_m\in\Z_p[\Delta]\) be the Teichm\"uller idempotent for
\(\omega^m\).  For \(m\neq1\), the \(p\)-adic logarithm induces a
\(\Delta\)-equivariant isomorphism
\[
 e_mU_1\xrightarrow{\ \log\ }
 M_m:=e_m\pi^2\mathcal O_F;
\]
see, for example, the standard logarithmic and ramification
filtrations in \cite{Serre}.  The additive module \(M_m\) is free
of rank one over \(\Z_p\).

Since
\[
 \sigma_r(\pi)=r\pi+O(\pi^2),
\]
the graded line
\(\pi^n\mathcal O_F/\pi^{n+1}\mathcal O_F\) has character
\(\omega^n\).  The first graded piece of \(M_m\) occurs at degree
\(m\), the next at degree \(m+p-1\), and
\[
 p=\text{a unit}\cdot\pi^{p-1}.
\]
Thus multiplication by \(p\) moves the first piece to the second,
and the leading coefficient gives an isomorphism
\[
 M_m/pM_m
 \xrightarrow{\ \sim\ }
 \pi^m\mathcal O_F/\pi^{m+1}\mathcal O_F
 \simeq\F_p.
\]
Modulo \(p\), the operator defining \(\ell_m\) is a nonzero scalar
multiple of \(e_m\).  Therefore \(\ell_m(u)=0\) if and only if
\(\log u\in pM_m\), which is equivalent to the principal-unit part
of \(u\) being a \(p\)-th power.  Every residue-field Teichm\"uller
unit is also a \(p\)-th power, because \(t^p=t\).
\end{proof}

\begin{lemma}[Restriction of the global projector]
\label{lem:restriction}
Fix one of the two primes of \(K_p\) above \(p\), equivalently one
of the two \(p\)-adic values of \(z\).  In the local Kummer group,
the product \(\widetilde\mu_k\) of
\eqref{eq:projected-unit} represents exactly the
\(\omega^k\)-projection of \(\mu_p\).
\end{lemma}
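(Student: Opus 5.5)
The argument has two steps. First I identify the decomposition group at a fixed prime above \(p\) and compare \(\theta\) with \(\omega^k\) on it. Then I push the global idempotent through the completion map.

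Since \(p\equiv1\pmod 6\), the prime \(p\) splits in \(\Q(\zeta_3)\). Hence there are exactly two primes \(\mathfrak P,\bar{\mathfrak P}\) of \(K_p\) above \(p\), and each has completion \(F=\Q_p(\zeta_p)\). Fixing \(\mathfrak P\) amounts to choosing an embedding \(\zeta_3\mapsto\) a root of unity of \(\Q_p\), equivalently a \(p\)-adic value of \(z\). The decomposition group \(D\subset G_p\) is the subgroup of index two fixing \(\zeta_3\), namely \(D=\{\sigma_a: a\equiv1\pmod 3\}\). Restriction to \(\Q_p(\zeta_p)\) identifies \(D\) with \(\Delta\). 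On \(D\) we have \(\chi(a)=1\), so \(\theta|_D=\omega^k\). The elements \(\sigma_a\) with \(a\equiv2\pmod3\) form the coset \(\tau D\), where \(\tau\) is the involution of Lemma~\ref{lem:global-unit}, and these elements move \(\mathfrak P\) to \(\bar{\mathfrak P}\).

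Next I split the product \eqref{eq:projected-unit} over the two cosets. For the coset \(D\), the factor \(\prod_{a\in D}\sigma_a(\mu_p)^{c_a}\) embeds into \(F\) as \(\prod_{\delta\in\Delta}\delta(\iota\mu_p)^{c_\delta}\), where \(\iota\) is the embedding attached to \(\mathfrak P\). For the coset \(\tau D\), I write \(\sigma_a=\tau\sigma_{a'}\) with \(a'\in D\); the ordering \(\sigma_{a'}\tau\) is equivalent, since \(G_p\) is abelian. Lemma~\ref{lem:global-unit} gives \(\tau\mu_p=\mu_p^{-1}\), so \(\sigma_a(\mu_p)=\sigma_{a'}(\mu_p)^{-1}\). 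Also \(c_a\equiv (2(p-1))^{-1}\chi(a)\omega(a)^{-k}\) with \(\chi(a)=-1\), and \(\omega(a)\) differs from \(\omega(a')\) by \(\omega(\tau)\), where \(\tau=\sigma_{t}\) with \(t\equiv1\pmod p\) and \(t\equiv2\pmod3\). Then \(\omega(t)=1\), so \(c_a\equiv-c_{a'}\). The two sign changes cancel, and each factor from the coset \(\tau D\) equals the corresponding factor from \(D\) modulo \(p\)-th powers. Therefore
\[
\iota(\widetilde\mu_k)\equiv\prod_{\delta\in\Delta}\delta(\iota\mu_p)^{2c_\delta}\pmod{F^{\times p}}.
\]
Here \(2c_\delta\equiv (p-1)^{-1}\omega(\delta)^{-k}\), and the right side is exactly the coefficient vector of \(e_{\omega^k}\in\F_p[\Delta]\). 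So the image is the \(\omega^k\)-projection of \(\iota\mu_p\) in \(F^\times/F^{\times p}\).

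The main point requiring care is the bookkeeping in the coset step. The element \(\tau\) must genuinely be realized as \(\sigma_t\) with \(t\equiv1\pmod p\), so that it fixes \(\eta\), as Lemma~\ref{lem:global-unit} asserts. The exponents must be compared as residues modulo \(p\), since the lifts \(c_a\) are only defined mod \(p\); by Proposition~\ref{prop:integral-projector} this discrepancy only changes \(p\)-th powers. I would also remark that the other prime \(\bar{\mathfrak P}\) gives the same statement with \(z\) replaced by \(\bar z\), which is why the lemma may say ``either'' prime.
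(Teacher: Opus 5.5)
Your proposal is correct and is essentially the paper's argument. The paper pairs the two lifts \(a_\pm\) (with \(a_\pm\equiv\pm1\pmod 3\)) of each residue \(r\bmod p\). It notes that their coefficients have opposite \(\chi\)-signs while \(\sigma_{a_-}(\mu_p)\) is the inverse of \(\sigma_{a_+}(\mu_p)\) at the chosen place, so the two contributions add to \((p-1)^{-1}r^{-k}\); this is exactly your coset computation over \(D\) and \(\tau D\). Your write-up additionally makes explicit two points the paper leaves implicit: the identification of the decomposition group \(D=\{\sigma_a: a\equiv1\pmod 3\}\) with \(\Delta\), and the realization \(\tau=\sigma_t\) with \(t\equiv1\pmod p\).
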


\begin{proof}
For \(r\in(\Z/p\Z)^\times\), let \(a_+\) and \(a_-\) be the two
lifts modulo \(3p\) satisfying
\[
 a_+\equiv1\pmod3,\qquad
 a_-\equiv-1\pmod3.
\]
At the selected place,
\[
 \sigma_{a_+}(\mu_p)=\mu_r,\qquad
 \sigma_{a_-}(\mu_p)=\mu_r^{-1}.
\]
Their idempotent coefficients have opposite \(\chi\)-signs.
Because the second base is inverted, the two contributions add
and give
\[
 \frac1{p-1}r^{-k}\pmod p,
\]
which is the coefficient of the local \(\omega^k\)-idempotent.
\end{proof}

\begin{proposition}[Every catalogue radical is unramified]
\label{prop:unramified}
For every row of Theorem~\ref{thm:catalogue},
\(\widetilde\mu_k\) is a local \(p\)-th power at both primes above
\(p\).  Consequently
\[
 K_p(\widetilde\mu_k^{1/p})/K_p
\]
is everywhere unramified, although this conclusion alone does not
prove that the extension is nontrivial.
\end{proposition}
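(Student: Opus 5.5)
The plan is to check the criterion of Theorem~\ref{thm:local-criterion} one prime at a time, with \(m=k\), and then derive unramifiedness from the local \(p\)-th power property. We fix one of the two primes \(\mathfrak p\) of \(K_p\) above \(p\). Since \(p\equiv1\pmod3\), \(p\) splits in \(\Q(\zeta_3)\), so the completion of \(K_p\) at \(\mathfrak p\) is \(F=\Q_p(\zeta_p)\), and \(z\) becomes one of the two roots of \(z^2-z+1\) in \(\Z_p\). Lemma~\ref{lem:restriction} says that the image of \(\widetilde\mu_k\) in \(F^\times/F^{\times p}\) is the \(\omega^k\)-projection of the local image of \(\mu_p\). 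So the hypothesis of Theorem~\ref{thm:local-criterion} holds for \(m=k\). For every row of the catalogue, \(k=p-j\) is odd with \(3\le k\le p-2\), so \(k\neq1\) and \(k\le p-2\), as required.

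Next we compute \(\ell_k(\widetilde\mu_k)\). Since \(\widetilde\mu_k\) is already an \(\omega^k\)-projection, \(\ell_k(\widetilde\mu_k)\) is a nonzero scalar multiple of \(\ell_k(\mu_p)\). That in turn is a nonzero scalar multiple of the coefficient of \(\pi^k\) in the \(\omega^k\)-component of \(\log\mu_p\), with the Teichm\"uller factor already removed; note \(\mu_p\equiv1\) modulo \(\pi\) up to such a factor. To get this coefficient, we write \(\eta=1-\pi\), so \(\eta=e^{-Y}\) with \(Y=-\log(1-\pi)\). This gives
\[
 1+z\eta=(1+z)\bigl(1-h(1-e^{-Y})\bigr),
\qquad
 1+\bar z\eta=(1+\bar z)\bigl(1-\bar h(1-e^{-Y})\bigr),
\]
and here \(\bar h=\bar z/(1+\bar z)=1-h\). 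By Proposition~\ref{prop:generating},
\[
 \log\mu_p=\log\frac{1+z}{1+\bar z}+\sum_{m\ge1}\bigl(P_m(1-h)-P_m(h)\bigr)Y^m .
\]
The constant term \(\log((1+z)/(1+\bar z))\) lies on the trivial character line, so it disappears under the \(\omega^k\)-projection. Under \(\sigma_r\), the variable \(Y=-\log\eta\) goes to \(rY\), so \(Y^m\) lies exactly on the \(\omega^m\)-line. Also \(Y=\pi+O(\pi^2)\), and the coefficients of \(Y^m\) are \(p\)-integral for \(m<p\). Hence, modulo \(\pi^{k+1}\), the \(\omega^k\)-component of \(\log\mu_p\) is \(-S_kY^k\equiv -S_k\pi^k\). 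Modulo \(p\), the operator \(\sum_r r^{-k}\sigma_r\) multiplies this by \(p-1\equiv-1\). Therefore \(\ell_k(\mu_p)\) is a unit multiple of \(S_k\bmod p\).

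The main obstacle is making this truncation rigorous. The terms \(Y^m\) with \(m\ge p\) can carry denominators divisible by \(p\) (for instance \(1/m!\)-type factors in \(P_m\)), and they lie on the lines \(\omega^{m\bmod(p-1)}\). I would first argue directly that \(\log\mu_p\) lies in \(\mathcal O_F\), since \(\mu_p\) is a principal unit after removing its Teichm\"uller factor. The \(\omega^k\)-projection of \(\log\mu_p\) then lies in \(M_k\), and the isomorphism \(M_k/pM_k\cong\pi^k\mathcal O_F/\pi^{k+1}\mathcal O_F\) from the proof of Theorem~\ref{thm:local-criterion} reduces everything to a single leading coefficient. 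That coefficient is read off in \(\F_p\) from the finite part of the series; contributions from \(m\ge p\) on the same line start in degree at least \(k+p-1\) and only affect \(pM_k\). If it proves easier, I would instead take \(\log\) of the two linear factors separately, to avoid summing the series.

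Once \(\ell_k(\mu_p)\) is known to be a unit multiple of \(S_k\), Theorem~\ref{thm:reflection} turns \(S_k=0\) into \(b_j=0\). For every catalogue row this vanishing is the content of Theorem~\ref{thm:catalogue}, which in any case is a finite check via Lemma~\ref{lem:finite-b}. So \(\ell_k=0\), and \(\widetilde\mu_k\in F^{\times p}\). The same argument works at the other prime, with \(z\) replaced by \(\bar z\); this only changes the sign of \(2h-1\). Finally, since \(\mu_p\) is a unit, \(K_p(\widetilde\mu_k^{1/p})/K_p\) is unramified at every prime not dividing \(p\). At the primes above \(p\), the local \(p\)-th power property makes the extension split completely, so it is everywhere unramified.
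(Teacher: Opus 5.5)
Your skeleton matches the paper's proof: Lemma~\ref{lem:restriction} with Theorem~\ref{thm:local-criterion} at $m=k$, then $\ell_k(\mu_p)=S_k$ from the Stirling generating function, then Theorem~\ref{thm:reflection}, and Lemma~\ref{lem:global-unit} away from $p$. At the second prime the paper uses $\tau$, while you redo the computation with $\bar z$; both work.

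The gap is in the step you flag as the main obstacle, and your proposed repair fails. In $F$ one has $Y=-\log(1-\pi)=-\log\zeta_p=0$, because the $p$-adic logarithm kills roots of unity. So $\eta=e^{-Y}$, $Y=\pi+O(\pi^2)$, and $\log\mu_p=\sum_m(P_m(1-h)-P_m(h))Y^m$ hold only in $\Q_p[[\pi]]$. They cannot be evaluated term by term in $F$: every truncation in powers of $Y$ evaluates to $0$. Your ``contributions from $m\ge p$'' are therefore all of $\log\mu_p$, not an element of $pM_k$. The bound ``degree at least $k+p-1$'' says nothing about actual valuations.

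Your fallback, expanding $\log(1-h\pi)=-\sum_{n\ge1}h^n\pi^n/n$ directly, is the right route. The real difficulty is the term $n=p$. Since $p=\text{unit}\cdot\pi^{p-1}$, the term $h^p\pi^p/p$ has $\pi$-valuation $1$, far below $k$, so you cannot simply read off a formal $\pi^k$-coefficient. Here is the missing argument.
\begin{itemize}
\item For $n<p$ the terms are $p$-integral polynomials in $\pi$. The $\pi^i$-coefficients ($i<k$) of $\sum_r r^{-k}\sigma_r(\pi)^n$ lie in $p\Z_{(p)}$, so they only contribute in $\pi^{p-1+i}\mathcal O_F\subseteq\pi^{k+1}\mathcal O_F$. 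On this part your formal $Y$-bookkeeping legitimately gives the $\pi^k$-coefficient $P_k(h)$.
\item For $n>p$ the terms lie in $\pi^{p+1}\mathcal O_F$.
\item For $n=p$, write $\sigma_r(\pi)=u_r\pi$ with $u_r\equiv r\pmod\pi$. Then $u_r^p\equiv r^p\equiv\omega(r)\pmod{\pi^p}$, so $\sigma_r(\pi^p/p)\equiv\omega(r)\,\pi^p/p\pmod{\pi^{p+1}}$. Hence $e_k(\pi^p/p)\in e_k\mathcal O_F\cap\pi^{p+1}\mathcal O_F\subseteq pM_k$, precisely because $k\neq1$. The difference between $\sum_r r^{-k}\sigma_r$ and $(p-1)e_k$ is divisible by $p$ and does no harm.
\end{itemize}
This character argument is what your proposal lacks. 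It is also where the hypothesis $m\neq1$ of Theorem~\ref{thm:local-criterion} is actually used. The paper's one-line appeal to the coefficient formula tacitly relies on it too.
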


\begin{proof}
At a chosen \(p\)-adic place,
\[
 \mu_p
 =z\,\frac{1-h\pi}{1-(1-h)\pi}.
\]
The constant \(z\) is a \(p\)-th power because \(z^p=z\).
The coefficient formula in Definition~\ref{def:Pm} gives
\[
 [\pi^k]
 \left(\sum_{r=1}^{p-1}r^{-k}\sigma_r\right)\log\mu_p
 =P_k(h)-P_k(1-h)=S_k.
\]
By Theorem~\ref{thm:reflection}, \(b_j=0\) makes this coefficient
zero modulo \(p\).  Lemma~\ref{lem:restriction} and
Theorem~\ref{thm:local-criterion} show that
\(\widetilde\mu_k\) is a local \(p\)-th power.  The involution
\(\tau\) exchanges the two primes and inverts the radical class,
so the same holds at the conjugate place.

Away from \(p\), a Kummer extension defined by a global unit is
unramified.  Lemma~\ref{lem:global-unit} supplies the unit
condition.
\end{proof}

\section{Finite Artin certificates}

To decide the remaining unit--class fork, let
\[
 q\equiv1\pmod {3p}
\]
be prime, choose \(\xi\in\F_q^\times\) of exact order \(3p\), and
let
\[
 \mathfrak q=(q,\zeta_{3p}-\xi),\qquad
 \eta_q=\xi^3,\qquad \rho_q=\xi^p.
\]
Reduction of the integral projected unit is the finite product
\begin{equation}
\label{eq:finite-product}
 M_{p,j}=
 \prod_{a\in G_p}
 \left(
 \frac{1+(-\rho_q^{2a})\eta_q^a}
      {1+(1+\rho_q^{2a})\eta_q^a}
 \right)^{c_a}
 \in\F_q^\times.
\end{equation}

\begin{proposition}[Complete split-prime certificate table]
\label{prop:witnesses}
For the twelve catalogue rows, the following data satisfy
\[
 M_{p,j}^{(q-1)/p}=\eta_q^e\neq1:
\]
\[
\begin{array}{c@{\;}r@{\;}r@{\;}r@{\;}r@{\qquad}c@{\;}r@{\;}r@{\;}r@{\;}r}
(p,j)&q&\xi&M&e&(p,j)&q&\xi&M&e\\ \toprule
(67,20)&1609&1363&1141&24&(331,182)&1987&4&1305&18\\
(103,10)&619&4&57&88&(337,130)&6067&64&3005&94\\
(139,40)&1669&16&1369&43&(337,136)&6067&64&339&306\\
(199,38)&2389&16&1451&40&(409,348)&4909&81&3913&39\\
(241,220)&1447&4&932&186&(421,348)&17683&15979&12249&6\\
(271,182)&1627&9&529&136&(457,362)&13711&1024&8316&147.
\end{array}
\]
Consequently no \(\widetilde\mu_k\) in the table is a global
\(p\)-th power.
\end{proposition}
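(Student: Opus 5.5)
The plan has two parts: a reduction from the global statement to the finite residue computation recorded in the table, and then the verification of that computation. Fix a row \((p,j)\) with its listed \(q\equiv1\pmod{3p}\) and \(\xi\).

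\emph{The reduction.} First check that \(\xi\) has exact order \(3p\) in \(\F_q^\times\). This amounts to checking \(\xi^{3p}=1\), \(\xi^3\neq1\) and \(\xi^p\neq1\). It then follows that \(\mathfrak q=(q,\zeta_{3p}-\xi)\) is a degree-one prime of \(K_p\). Its residue map \(\mathcal O_{K_p}\to\F_q\) sends \(\zeta_{3p}\mapsto\xi\), hence \(\eta=\zeta_p\mapsto\xi^{3}\) or a fixed power of it. I would normalize by writing \(\zeta_p=\zeta_{3p}^{3}\) and \(\zeta_3=\zeta_{3p}^{p}\), giving \(\eta\mapsto\eta_q\) and \(\zeta_3\mapsto\rho_q\). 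Then \(z=-\zeta_3^2\mapsto-\rho_q^2\) and \(\bar z=1-z\mapsto1+\rho_q^2\). Applying \(\sigma_a\) raises both roots of unity to the \(a\)-th power, so the image of \(\sigma_a(\mu_p)\) in \(\F_q^\times\) is exactly the \(a\)-th factor of \eqref{eq:finite-product}. Lemma~\ref{lem:global-unit} guarantees that \(1+\bar z\eta\) and its conjugates are units, so no denominator vanishes mod \(\mathfrak q\), and the reduction of \(\widetilde\mu_k\) is \(M_{p,j}\).

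\emph{The contradiction.} Suppose \(\widetilde\mu_k=\beta^p\) with \(\beta\in K_p^\times\). Since \(\widetilde\mu_k\) is a unit, \(\beta\) is a unit as well (its valuation at every prime is divisible-by-\(p\) times zero). Reducing mod \(\mathfrak q\) gives \(M_{p,j}=\bar\beta^{\,p}\). Because \(p\mid q-1\), this forces \(M_{p,j}^{(q-1)/p}=\bar\beta^{\,q-1}=1\), contradicting the certificate \(M_{p,j}^{(q-1)/p}=\eta_q^e\neq1\). By Proposition~\ref{prop:integral-projector}, the conclusion does not depend on the chosen lifts \(c_a\), since changing them alters \(M_{p,j}\) by a \(p\)-th power in \(\F_q^\times\) and leaves \(M_{p,j}^{(q-1)/p}\) unchanged.

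\emph{The verification.} The substantive work is computing the twelve rows. For each row:
\begin{enumerate}
\item compute \(\theta(a)=\chi_{-3}(a)a^{k}\bmod p\) and \(c_a\equiv(2(p-1))^{-1}\theta(a)^{-1}\pmod p\) for all \(a\in G_p\);
\item form the product in \(\F_q\) of size \(2(p-1)\) and compare it with the listed \(M\);
\item raise \(M\) to the power \((q-1)/p\) and compare the result with \(\eta_q^{e}\), checking that \(e\not\equiv0\pmod p\).
\end{enumerate}
All of this is deterministic modular arithmetic of the kind the accompanying program performs.

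I expect the main obstacle to be convention consistency rather than any conceptual step. The choice of \(\mathfrak q\) must match the embedding of \(\zeta_3\) and \(\zeta_p\) that is used, and the indexing of \(G_p\) and of the exponents must match Proposition~\ref{prop:integral-projector}. The nontriviality claim itself is robust to these choices: a different prime above \(q\), or a different choice of \(\xi\), amounts to applying some \(\sigma_b\). By Proposition~\ref{prop:integral-projector}, this multiplies the class by \(\theta(b)\in\F_p^\times\), so it only rescales \(e\) by a unit mod \(p\) and never turns a nonzero \(e\) into zero. The specific residues \(M\) and \(e\) in the table, however, are tied to the normalization fixed above, and that normalization is the one that must be checked against them.
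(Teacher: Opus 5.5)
Your proposal is correct and uses the same argument as the paper. You check that $q\equiv1\pmod{3p}$ and that $\xi$ has exact order $3p$, reduce the integral product modulo $\mathfrak q=(q,\zeta_{3p}-\xi)$ to obtain $M_{p,j}$, and observe that a global $p$-th power would force $M_{p,j}^{(q-1)/p}=1$, contradicting $e\not\equiv0\pmod p$. You spell out the reduction map, the independence of the lifts $c_a$, and the invariance under a change of prime above $q$ more carefully than the paper does. The one item to add to your verification list is the trivial primality check for each listed $q$, which the paper states explicitly and which your argument presupposes when it writes $\F_q$.
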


\begin{proof}
For every row, \(q\) is prime, \(q\equiv1\pmod {3p}\), and the
listed \(\xi\) has exact order \(3p\).  Substitution into
\eqref{eq:finite-product} gives \(M\).  The final modular power
gives \(\eta_q^e\), with \(e\not\equiv0\pmod p\).  A global
\(p\)-th power has trivial \(p\)-th-power residue symbol at every
prime away from \(p\).
\end{proof}

\begin{corollary}
\label{cor:nontrivial-unramified}
Every row of Theorem~\ref{thm:catalogue} yields a nontrivial
everywhere-unramified cyclic extension of degree \(p\).  Its class
character is
\[
 \omega\theta^{-1}
 =\chi_{-3}\omega^j=\psi.
\]
\end{corollary}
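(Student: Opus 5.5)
The corollary assembles Proposition~\ref{prop:unramified} and Proposition~\ref{prop:witnesses}, together with a Kummer-theoretic computation of how \(G_p\) acts.

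First, nontriviality and unramifiedness. Proposition~\ref{prop:witnesses} shows that \(\widetilde\mu_k\) is not a \(p\)-th power in \(K_p\). Since \(\zeta_p\in K_p\), the extension \(L=K_p(\widetilde\mu_k^{1/p})\) is therefore cyclic of degree exactly \(p\). Proposition~\ref{prop:unramified} shows that \(L/K_p\) is unramified at both primes above \(p\). At primes away from \(p\) it is unramified because \(\widetilde\mu_k\) is a global unit, by Lemma~\ref{lem:global-unit}. The infinite places cause no issue, since \(K_p\) is totally imaginary. Hence \(L\) lies in the Hilbert class field, and class field theory gives a nonzero homomorphism \(\Cl(K_p)\to\Z/p\Z\).

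Second, the Galois action. The line \(\F_p[\widetilde\mu_k]\) is \(G_p\)-stable by Proposition~\ref{prop:integral-projector}, so \(L/\Q\) is Galois. Take the Kummer pairing
\[
 \langle\ ,\ \rangle:\Gal(L/K_p)\times\langle\widetilde\mu_k\rangle K_p^{\times p}/K_p^{\times p}\to\mu_p .
\]
It is \(G_p\)-equivariant: \(\langle\sigma g\sigma^{-1},\sigma x\rangle=\sigma\langle g,x\rangle\). Because \(\sigma_b\) acts on \(\mu_p\) by \(\omega(b)\) and on the radical by \(\theta(b)\), conjugation by \(\sigma_b\) acts on \(\Gal(L/K_p)\) through \(\omega(b)\theta(b)^{-1}\). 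This equals \(\psi(b)\), by the reflection \(\psi=\omega\theta^{-1}\) noted in the introduction. The Artin map is \(G_p\)-equivariant, so the surjection \(\Cl(K_p)\otimes\Z_p\to\Gal(L/K_p)\) factors through the \(\psi\)-eigenspace. In particular that eigenspace is nonzero.

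The only point that needs care is the sign convention in the equivariance of the Kummer pairing. The check is that the class character is \(\omega\theta^{-1}\) and not \(\theta\omega^{-1}\). One can confirm this with parity as a consistency test: \(\theta=\chi_{-3}\omega^k\) with \(k\) odd is even, so \(\omega\theta^{-1}\) is odd, as a class-group character must be for a minus-part component.
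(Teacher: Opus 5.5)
Your argument is correct and matches the paper's proof: combine Propositions~\ref{prop:unramified} and~\ref{prop:witnesses}, then read off the character by Kummer duality. You write out the equivariance of the Kummer pairing, whereas the paper checks \(\omega\theta^{-1}=\psi\) directly from \(1-k\equiv j\pmod{p-1}\) (and \(\chi_{-3}^{-1}=\chi_{-3}\)) instead of quoting the introduction. One small correction: your parity test cannot detect the sign convention, because \(\theta\omega^{-1}=\psi^{-1}\) is also odd. It is the equivariance computation \(\sigma g\sigma^{-1}=g^{\omega(b)\theta(b)^{-1}}\) that settles it.
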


\begin{proof}
Combine Propositions~\ref{prop:unramified} and
\ref{prop:witnesses}.  The character identity is Kummer duality,
using \(j+k=p\), hence \(1-k\equiv j\pmod {p-1}\).
\end{proof}

\section{Exact class-component orders}
\label{sec:main-conjecture}

For every catalogue row, the character
\[
 \psi=\chi_{-3}\omega^j
\]
is odd, primitive of conductor \(3p\), and distinct from
\(\omega\).  Moreover,
\[
 p\nmid[K_p:\Q]=2(p-1),
\]
so the \(p\)-adic group algebra is semisimple.  In this
semi-simple setting the algebraic and arithmetic isotypic
components of the class group coincide; the distinction between
the two notions, which becomes essential when \(p\) divides the
degree, is analyzed in \cite{GrasPhi}.

\begin{theorem}[Character-wise completeness]
\label{thm:completeness}
For every row of Theorem~\ref{thm:catalogue},
\[
 \length_{\Z_p}
 \bigl(\Cl(K_p)\otimes\Z_p\bigr)_\psi
 =
 v_p(B_{1,\psi^{-1}})=1.
\]
Thus
\[
 \#\bigl(\Cl(K_p)\otimes\Z_p\bigr)_\psi=p,
\]
and the extension from
Corollary~\ref{cor:nontrivial-unramified} is the complete
\(\psi\)-component of the Hilbert class field.
\end{theorem}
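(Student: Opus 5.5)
The plan is to combine the character-wise Main Conjecture (Mazur--Wiles, in the form valid over the $p$-adic group ring of the tame group $G_p$) with the valuation computed in Proposition~\ref{prop:digits} and the nontriviality from Corollary~\ref{cor:nontrivial-unramified}. First, record the hypotheses needed for the Main Conjecture on the $\psi$-line: $\psi=\chi_{-3}\omega^j$ is odd, primitive of conductor $3p$, and $\psi\neq\omega$, so the Teichm\"uller-type exceptional character is excluded. Since $p\nmid|G_p|=2(p-1)$, the algebra $\Z_p[\psi][G_p]$ (or $\Z_p[G_p]$ after extending scalars to contain the values of $\psi$, which here are in $\Z_p$ since $\chi$ is quadratic and $\omega$ is $\Z_p^\times$-valued) decomposes into a product of DVRs via the orthogonal idempotents $e_\psi$. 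Thus $(\Cl(K_p)\otimes\Z_p)_\psi=e_\psi(\Cl(K_p)\otimes\Z_p)$ is a finite $\Z_p$-module, and its length is well defined.

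Second, invoke the analytic class number formula form of the Main Conjecture for odd characters of the first layer: for odd $\psi\neq\omega$, $\length_{\Z_p}(e_\psi A)=v_p(B_{1,\psi^{-1}})$, where $A$ is the $p$-part of $\Cl(K_p)$. I would cite this as the character-wise statement derived from Iwasawa's Main Conjecture by descent to level zero (e.g.\ as in Washington's treatment via the Stickelberger ideal: $e_{\psi}A\cong \Z_p/B_{1,\psi^{-1}}\Z_p$ is not needed, only its length). The one subtlety is the convention of which of $\psi$ or $\psi^{-1}$ (equivalently $\omega\psi^{-1}$ on the even side) appears, so I would check that the Stickelberger element $\sum a\,\sigma_a^{-1}$ acting by $e_\psi$ yields $B_{1,\psi^{-1}}$; this matches the definition of $T_j$ with weights $a\chi(a)\widehat a^{-j}$.

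Third, plug in Proposition~\ref{prop:digits}: $v_p(B_{1,\psi^{-1}})=1$ for every row, so the $\psi$-component is cyclic of order $p$ (a length-one $\Z_p$-module is $\Z/p$). Finally, Corollary~\ref{cor:nontrivial-unramified} provides a nontrivial unramified cyclic degree-$p$ extension $K_p(\widetilde\mu_k^{1/p})$ on which $G_p$ acts through $\psi$; by class field theory its Galois group is a nonzero $\psi$-quotient of $A/pA$, hence of $e_\psi A\cong\Z/p$, so it equals the whole $\psi$-component of the Hilbert class field. The main obstacle is the second step: pinning down precisely the form and normalization of the Main Conjecture at level $K_p=\Q(\zeta_{3p})$ (tame conductor $3$ times $p$) and matching its Bernoulli number with the computed $B$; I would first try to deduce it from Mazur--Wiles for the field $\Q(\zeta_{3p})$ viewed as the first layer of the cyclotomic $\Z_p$-tower over $\Q(\zeta_{3p})$, using Kummer duality only as a consistency check with the even $\theta$-line.
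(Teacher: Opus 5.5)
Your proposal is correct and takes essentially the same route as the paper. The paper also invokes the character-wise Main Conjecture in its Mazur--Wiles form, cited as Greither's Theorem~4.1, using that $\psi$ is $\Z_p$-valued, odd, primitive of conductor $3p$, that $\psi\neq\omega$, and that $p\nmid 2(p-1)$. It then takes $v_p(B_{1,\psi^{-1}})=1$ from Proposition~\ref{prop:digits} and lets the nontrivial degree-$p$ quotient of Corollary~\ref{cor:nontrivial-unramified} exhaust the order-$p$ component, exactly as you do.

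One phrase to tighten: ``analytic class number formula form'' is not the right input. The analytic formula alone controls only the product over all odd characters. The per-character Main Conjecture, which you do ultimately cite, is what the argument needs.
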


\begin{proof}
The values of \(\psi\) lie in \(\Z_p^\times\), so its coefficient
ring is \(\Z_p\).  Moreover, the \(p\)-Sylow subgroup of
\(\Gal(K_p/\Q)\) is trivial, so the faithfulness condition in
Greither's finite-level odd-character formula is automatic.
Greither's Theorem~4.1, which records the odd-\(p\),
prime-to-the-group-order case as the Mazur--Wiles case
\cite[Theorem~4.1]{Greither}, gives
\[
 \length_{\Z_p}
 \bigl(\Cl(K_p)\otimes\Z_p\bigr)_\psi
 =
 v_p(B_{1,\psi^{-1}}).
\]
This is the per-character input; the analytic
Iwasawa--Leopoldt class-number formula by itself controls only the
product over characters.  Proposition~\ref{prop:digits} makes the
right side equal to one.  The nontrivial unramified quotient
already has degree \(p\), so it exhausts the component.
\end{proof}

This completes the proof of Theorem~\ref{thm:catalogue}.

\section{The first component in full: p = 67}

We record the first row in enough detail to make the construction
independently checkable without reading the general tables.
Put
\[
 K=\Q(\zeta_{201}),\qquad
 \eta=\zeta_{67},\qquad
 \epsilon=\zeta_3,\qquad
 z=-\epsilon^2.
\]
Here
\[
 (p,j,k)=(67,20,47),\qquad
 \theta=\chi_{-3}\omega^{47}.
\]
Since \(132^{-1}=33\) in \(\F_{67}\),
\[
 c_a\equiv33\,\chi_{-3}(a)a^{-47}\pmod {67},
\]
and
\[
 \widetilde\mu_{47}
 =
 \prod_{a\in(\Z/201\Z)^\times}
 \sigma_a(\mu_{67})^{c_a}.
\]

\begin{proposition}[The local \(67\)-adic certificate]
\label{prop:p67-local}
Choose \(z\equiv30\pmod {67}\).  Modulo \(67^2\),
\[
 z=700,\qquad h=\frac z{1+z}=1730.
\]
Then
\[
 P_{47}(1730)=134,\qquad
 P_{47}(1-1730)=4355,
\]
and hence
\[
 \boxed{\qquad
 S_{47}=67\cdot4\pmod {67^2}.
 \qquad}
\]
For Teichm\"uller lifts modulo \(67^3\),
\[
 \boxed{\qquad
 \frac1{67}
 \sum_{\substack{1\leq a<201\\(a,201)=1}}
 a\,\chi_{-3}(a)\widehat a^{-20}
 =67\cdot33\pmod {67^2}.
 \qquad}
\]
Thus the local and Bernoulli zeros are both simple, and
\[
 \frac{B_{1,\chi_{-3}\omega^{-20}}}{67}
 \equiv11\pmod {67}.
\]
\end{proposition}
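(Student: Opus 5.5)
This is a finite computation, so the proof is an explicit evaluation modulo \(67^2\) and \(67^3\). The plan has four checks; each is routine integer arithmetic, but each must be stated so that it can be reproduced.

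\textbf{Step 1: the lifts \(z\) and \(h\).} First confirm that \(z=30\) is a root of \(z^2-z+1\) modulo \(67\): indeed \(900-30+1=871=13\cdot67\). Then Hensel-lift this root. The derivative is \(2z-1=59\), a unit modulo \(67\), so the lift is unique. One checks that \(700\equiv30\pmod{67}\) and that \(700^2-700+1\equiv0\pmod{4489}\). Next invert \(701\) modulo \(4489\) and verify that \(701\cdot1730\equiv700\pmod{4489}\). Finally, as a consistency check, confirm that \((2h-1)^2\equiv-1/3\) modulo \(67^2\), as required by \eqref{eq:h-relation}.

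\textbf{Step 2: the Stirling polynomial values.} Evaluate \(P_{47}\) at \(h\) and at \(1-h\). The coefficients of \(P_{47}\) have denominators \(47!/(n-1)!\), which are prime to \(67\) because \(47<67\). So \(P_{47}\) has \(67\)-integral coefficients and can be reduced modulo \(67^2\) directly. There are two ways to organise the computation:
\begin{itemize}
\item compute the Stirling numbers \(\{47 \atop n\}\) by the recurrence and take the sum from Definition~\ref{def:Pm};
\item more robustly, read off \(P_m(X)\) as the \(Y^{47}\) coefficient of \(-\log(1-X(1-e^{-Y}))\) from Proposition~\ref{prop:generating}, using truncated power series with coefficients in \(\Z/67^2\Z\). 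All the factorials involved are below \(67\), so they are invertible.
\end{itemize}
Subtracting the two values gives \(S_{47}=134-4355\equiv268=67\cdot4\pmod{4489}\). This is consistent with the vanishing of \(b_{20}\) via Theorem~\ref{thm:reflection}.

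\textbf{Step 3: the Teichmüller sum.} Compute the Teichmüller lifts modulo \(67^3\) by iterating \(x\mapsto x^{67}\) until the value stabilises; two iterations suffice. Equivalently, take \(\widehat a=a^{67^2}\bmod 67^3\). Then sum \(a\,\chi_{-3}(a)\widehat a^{-20}\) over the 132 units \(a<201\), in \(\Z/67^3\Z\). The following must then be checked:
\begin{itemize}
\item the total \(T_{20}\) is divisible by \(67^2\);
\item \(T_{20}/67^2\equiv33\pmod{67}\);
\item dividing by \(3\cdot67\), as in Proposition~\ref{prop:digits}, gives \(B/67\equiv33\cdot3^{-1}=11\pmod{67}\).
\end{itemize}
The identification \(B_{1,\psi^{-1}}=T_j/(3p)\) is the standard formula \(B_{1,\chi}=\frac1f\sum a\chi(a)\). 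Here the Teichmüller character values are approximated modulo \(67^3\), which determines \(B\) modulo \(67^2\).

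\textbf{Step 4: simplicity, and the main obstacle.} Simplicity follows because both divided digits, \(4\) and \(33\), are nonzero modulo \(67\). The main obstacle is not conceptual but one of reliability in Step 2. The degree-\(47\) polynomial is evaluated modulo \(67^2\), and large Stirling numbers are involved. I would guard this by cross-checking the first-order value modulo \(67\) against \(-(2h-1)\,19!\,b_{20}\), computed from Lemma~\ref{lem:finite-b}, which should be zero. I would also recompute via Proposition~\ref{prop:one-line}, checking that \(E_{47}\equiv(1+2^{47})S_{47}\).
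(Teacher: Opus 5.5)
Your proposal is correct and follows the paper's own proof: verify that \(700\) is the Hensel lift and that \(700/701\equiv1730\pmod{67^2}\), evaluate \(P_{47}\) directly, then compute the Teichm\"uller sum in \(\Z/67^3\Z\) and divide by \(3\cdot67\); your cross-checks go beyond what the paper records. A cheaper sanity check than the one-line identity: the generating function gives \(P_m(1-X)=(-1)^mP_m(X)\) for \(m\geq2\), so \(S_{47}=2P_{47}(h)\), which agrees with \(134+4355=67^2\).
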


\begin{proof}
One checks
\[
 700^2-700+1=0\pmod {67^2},
\qquad
 700/701=1730\pmod {67^2}.
\]
The two displayed \(P_{47}\)-values are direct evaluations of
Definition~\ref{def:Pm}; their difference is
\[
 134-4355\equiv268=67\cdot4\pmod {67^2}.
\]
The Teichm\"uller sum is a finite calculation in
\(\Z/67^3\Z\).  Division by \(3\cdot67\) gives the final digit
\(11\).
\end{proof}

\begin{theorem}[Explicit unramified \(67\)-extension]
\label{thm:p67-extension}
At
\[
 \mathfrak q_{1609}=(1609,\zeta_{201}-1363),
\]
one has, in the arithmetic-Frobenius convention,
\[
 \boxed{\qquad
 \legp{\widetilde\mu_{47}}{\mathfrak q_{1609}}{67}
 =\zeta_{67}^{24}\neq1.
 \qquad}
\]
Consequently
\[
 K(\widetilde\mu_{47}^{1/67})/K
\]
is a nontrivial everywhere-unramified cyclic extension of degree
\(67\), and it is the complete
\(\chi_{-3}\omega^{20}\)-component of the Hilbert class field.
\end{theorem}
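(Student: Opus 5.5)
The plan is to reduce Theorem~\ref{thm:p67-extension} to three ingredients already in hand: a finite residue-symbol computation at \(\mathfrak q_{1609}\), the local criterion at the primes above \(67\), and the character-wise Main Conjecture. These are the \(p=67\) specialisations of Propositions~\ref{prop:witnesses}, \ref{prop:unramified} and Theorem~\ref{thm:completeness}.

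First, the residue symbol. I will check that \(1609\) is prime, that \(1609\equiv1\pmod{201}\) (indeed \(1609=8\cdot201+1\)), and that \(\xi=1363\) has exact order \(201\) in \(\F_{1609}^\times\). For the order it suffices to check \(\xi^{201}=1\), \(\xi^{67}\neq1\) and \(\xi^{3}\neq1\). Then \(\mathfrak q_{1609}=(1609,\zeta_{201}-1363)\) is a degree-one prime, and reduction modulo \(\mathfrak q_{1609}\) sends \(\zeta_{201}\mapsto\xi\), \(\eta\mapsto\eta_q=\xi^3\) and \(\zeta_3\mapsto\rho_q=\xi^{67}\). So \(z=-\zeta_3^2\) reduces to \(-\rho_q^{2}\), and \(\sigma_a\) acts by \(\xi\mapsto\xi^a\).

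Next I reduce \(\widetilde\mu_{47}\) as the finite product \eqref{eq:finite-product} with \(c_a\equiv33\chi_{-3}(a)a^{-47}\). This should give \(M=1141\), and \(M^{(1609-1)/67}=M^{24}\) should equal \(\eta_q^{24}\). Since \(\eta_q\) is the image of \(\zeta_{67}\), the definition of the power residue symbol (Euler's criterion in the residue field \(\F_{1609}\)) turns this into \(\legp{\widetilde\mu_{47}}{\mathfrak q_{1609}}{67}=\zeta_{67}^{24}\neq1\). Here I must be careful that the arithmetic-Frobenius convention matches the choice \(\eta_q=\xi^3\). A global \(67\)-th power has trivial symbol, so \(\widetilde\mu_{47}\notin K^{\times67}\), and the Kummer extension has degree exactly \(67\).

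Then unramifiedness. Proposition~\ref{prop:p67-local} gives \(S_{47}\equiv0\pmod{67}\). By Proposition~\ref{prop:unramified}, which uses Lemma~\ref{lem:restriction} and Theorem~\ref{thm:local-criterion}, \(\widetilde\mu_{47}\) is a local \(67\)-th power at the prime above \(67\) with \(z\equiv30\). The involution \(\tau\) then handles the conjugate prime. Away from \(67\), the radical is a unit by Lemma~\ref{lem:global-unit}, so there is no ramification there either. Corollary~\ref{cor:nontrivial-unramified} identifies the class character as \(\omega\theta^{-1}=\chi_{-3}\omega^{20}\).

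Finally, Proposition~\ref{prop:p67-local} gives \(v_{67}(B_{1,\chi_{-3}\omega^{-20}})=1\), since the digit \(11\) is nonzero. Theorem~\ref{thm:completeness} then shows that the \(\psi\)-component of \(\Cl(K)\otimes\Z_{67}\) has order \(67\), so the degree-\(67\) unramified extension exhausts it.

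The main obstacle is the certificate computation: evaluating the product of \(\varphi(201)=132\) factors modulo \(1609\) without sign or convention errors, in particular the lift \(c_a\) and the identification of \(z\) and \(\bar z\) under \(\rho_q\). I would carry this out by direct machine integer arithmetic, and cross-check that changing the lifts \(c_a\) leaves the symbol unchanged, as Proposition~\ref{prop:integral-projector} predicts.
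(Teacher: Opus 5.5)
Your proposal is correct and follows the paper's proof essentially step for step. You certify that \(1609\equiv1\pmod{201}\) and that \(\xi=1363\) has exact order \(201\), reduce the \(132\)-term product to \(1141=\eta_q\), and raise it to the \(24\)th power to get \(\eta_q^{24}\neq1\); you then invoke Proposition~\ref{prop:unramified} for unramifiedness and Theorem~\ref{thm:completeness} (with \(v_{67}(B_{1,\psi^{-1}})=1\) from Proposition~\ref{prop:p67-local}) for completeness. The only cosmetic difference is that you check the order of \(\xi\) directly via \(\xi^{201}=1\), \(\xi^{67}\neq1\), \(\xi^{3}\neq1\), whereas the paper gets it from \(7\) being a primitive root modulo \(1609\) and \(\xi=7^8\).
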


\begin{proof}
The prime satisfies
\[
 1609-1=1608=8\cdot201=24\cdot67.
\]
The integer \(7\) is a primitive root modulo \(1609\).  Put
\[
 \xi=7^8=1363,\qquad
 \eta_0=\xi^3=1141,\qquad
 \rho_0=\xi^{67}=250.
\]
These elements have orders \(201,67,3\), respectively.  Reduction
of the 132-term product gives
\[
 \prod_{a\in(\Z/201\Z)^\times}
 \left(
 \frac{1+(-\rho_0^{2a})\eta_0^a}
      {1+(1+\rho_0^{2a})\eta_0^a}
 \right)^{c_a}
 =1141=\eta_0.
\]
Raising to \((1609-1)/67=24\) gives
\[
 1141^{24}=320=\eta_0^{24}\neq1.
\]
Proposition~\ref{prop:unramified} supplies unramifiedness and
Theorem~\ref{thm:completeness} supplies completeness.
\end{proof}

\begin{corollary}[Normalized class coordinate]
\label{cor:kappa}
The residue character
\[
 \varphi_{67}\colon
 \Cl(K)_{\chi_{-3}\omega^{20}}\longrightarrow\mu_{67},
\qquad
 [\mathfrak a]\longmapsto
 \legp{\widetilde\mu_{47}}{\mathfrak a}{67},
\]
is an isomorphism.  If
\[
 \kappa_{67}([\mathfrak a])
 =14\log_{\zeta_{67}}\varphi_{67}([\mathfrak a])
 \quad\text{in }\F_{67},
\]
then
\[
 \kappa_{67}([\mathfrak q_{1609}])=1.
\]
\end{corollary}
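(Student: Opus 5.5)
Two claims need proof: that \(\varphi_{67}\) is an isomorphism, and that \(\kappa_{67}([\mathfrak q_{1609}])=1\).

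\emph{Well-definedness of \(\varphi_{67}\).} By Proposition~\ref{prop:unramified} and Theorem~\ref{thm:p67-extension}, \(L=K(\widetilde\mu_{47}^{1/67})\) is a cyclic unramified extension of degree \(67\). Kummer theory identifies the Artin symbol \(\mathrm{Frob}_{\mathfrak a}\in\Gal(L/K)\) with the residue symbol via
\[
\mathrm{Frob}_{\mathfrak a}(\widetilde\mu_{47}^{1/67})/\widetilde\mu_{47}^{1/67}=\legp{\widetilde\mu_{47}}{\mathfrak a}{67}.
\]
This quantity is defined for ideals prime to \(67\widetilde\mu_{47}\), and since \(\widetilde\mu_{47}\) is a unit, for ideals prime to \(67\). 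Because \(L/K\) is unramified, the Artin map factors through \(\Cl(K)\). Every class contains ideals prime to \(67\), so \(\varphi_{67}\) extends uniquely to a homomorphism \(\Cl(K)\to\mu_{67}\).

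\emph{It sees exactly the \(\psi\)-part.} By Corollary~\ref{cor:nontrivial-unramified}, \(\sigma_b\) acts on \(\Gal(L/K)\) through \(\psi=\chi_{-3}\omega^{20}\). Equivariance of the Artin map therefore forces \(\varphi_{67}\) to kill every isotypic component other than the \(\psi\)-component, so we may restrict it to \(\Cl(K)_{\psi}\).

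\emph{Isomorphism.} Theorem~\ref{thm:completeness} gives \(\#\Cl(K)_{\psi}=67\), and by Theorem~\ref{thm:p67-extension} the symbol at \(\mathfrak q_{1609}\) is \(\zeta_{67}^{24}\neq1\), so the map is nonzero. There is one subtlety: \([\mathfrak q_{1609}]\) need not lie in the \(\psi\)-component. However, the map restricted to \(\Cl(K)_\psi\) is the composite of the idempotent projection \(e_\psi\) with \(\varphi_{67}\), and \(\varphi_{67}\) vanishes off this component. Hence \(\varphi_{67}([\mathfrak q])=\varphi_{67}(e_\psi[\mathfrak q])\neq1\), and the restriction to \(\Cl(K)_\psi\) is nonzero. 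A nonzero homomorphism between cyclic groups of order \(67\) is an isomorphism.

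\emph{Normalization.} The key step is \(14\cdot24=336=5\cdot67+1\equiv1\pmod{67}\), so \(\kappa_{67}([\mathfrak q_{1609}])=1\). If \(\kappa\) is meant on \(\Cl(K)_\psi\), the value is read at \(e_\psi[\mathfrak q_{1609}]\), which gives the same symbol by the previous step.

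The main obstacle is matching sign and Frobenius conventions. The arithmetic-Frobenius convention of Theorem~\ref{thm:p67-extension} fixes the exponent \(24\) rather than \(-24\). I would check this directly from the certificate: with the identification \(\zeta_{67}\mapsto\eta_0=1141\) and \(N\mathfrak q=1609\), one has \(1141^{24}=320=\eta_0^{24}\), so the exponent is \(24\) and the normalization holds.
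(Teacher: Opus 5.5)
Your proof is correct and follows the paper's argument. It takes the exponent \(24\) from Theorem~\ref{thm:p67-extension}, uses \(14\cdot24\equiv1\pmod{67}\), and uses Theorem~\ref{thm:completeness} to make a nonzero map between groups of order \(67\) an isomorphism. You also spell out what the paper's two-line proof leaves implicit: the symbol is the Artin map of the unramified extension and kills every non-\(\psi\) component, so \(\varphi_{67}([\mathfrak q_{1609}])=\varphi_{67}(e_\psi[\mathfrak q_{1609}])\) even though \([\mathfrak q_{1609}]\) need not lie in the \(\psi\)-component.
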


\begin{proof}
Theorem~\ref{thm:p67-extension} gives exponent \(24\), and
\(24^{-1}=14\) in \(\F_{67}\).  Theorem~\ref{thm:completeness}
shows that the source and target both have order \(67\).
\end{proof}

\section{Negative controls and the global nature of the character}

The class character is not annihilated by several natural
primewise conditions that arose in the construction's original
geometric setting.

\begin{proposition}[Two finite controls]
\label{prop:controls}
There are split finite-field configurations satisfying a local
Fermat identity and two secant \(67\)-th-power conditions, but
having nontrivial \(\varphi_{67}\).  Explicitly:
\[
\begin{array}{c|r|r|r|c}
q&z&\bar z&\eta&\varphi_{67}\\ \hline
173263&4908&168356&47242&\eta^{57}\\
7783927&6283819&1500109&2490097&\eta^{8}.
\end{array}
\]
For the first row, take
\[
 (a,b,c)=(4908,9400,1),
\]
with
\[
 1-\bar z\eta=79276^{67},\qquad
 z+\bar z\eta=95569^{67}.
\]
For the second, take
\[
 (a,b,c)=(6283819,3760397,1),
\]
with
\[
 1-\bar z\eta=7454094^{67},\qquad
 z+\bar z\eta=2232030^{67}.
\]
In both cases,
\[
 a^{67}+b^{67}=c^{67}
\]
in the displayed finite field, while the class character is
nontrivial.  Moreover,
\[
 7783927\equiv1\pmod {67^2}.
\]
\end{proposition}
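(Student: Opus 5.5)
The proposition is a finite verification, so the plan is to check each displayed item by integer arithmetic modulo the stated prime \(q\). All checks can be done by square-and-multiply with integers smaller than \(q^2\).

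\textbf{Field data.} For each row, I would first confirm that the listed \(q\) is prime. Then I check that \(z^2-z+1\equiv0\pmod q\), so that \(z\) is a primitive sixth root of unity with \(z=-\zeta_3^2\) for some cube root of unity \(\zeta_3\in\F_q\), and that \(\bar z\equiv1-z\). Next I check that \(\eta\) has exact order \(67\): \(\eta\neq1\) and \(\eta^{67}=1\). Both are possible because \(q\equiv1\pmod{201}\). Indeed \(173262=2\cdot3\cdot67\cdot431\), and \(7783926\) is divisible by \(6\cdot67^2\), the latter giving the stated congruence modulo \(67^2\) directly.

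\textbf{Local Fermat and secant conditions.} I would reduce the two \(67\)-th power identities \(1-\bar z\eta=x^{67}\) and \(z+\bar z\eta=y^{67}\) modulo \(q\) and compare both sides. I would then evaluate \(a^{67}+b^{67}-c^{67}\) modulo \(q\) for the given triples. I expect the triple to be built from the secant identities, for instance \(b\equiv y/x\) up to a \(67\)-th root of unity times a scaling. So after verifying directly, I would also record that algebraic relation, since it explains why the Fermat identity holds.

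\textbf{The character value.} This is the main step. I would fix an embedding \(\zeta_{201}\mapsto\xi\) compatible with the data, choosing \(\xi\) of order \(201\) with \(\xi^3=\eta\) and \(\xi^{67}=\zeta_3\), where \(\zeta_3\) is the cube root determined by \(z\). This exists because \(\eta\) and \(\zeta_3\) determine \(\xi\) by CRT in the cyclic group of order \(201\). I would then evaluate the \(132\)-term product \eqref{eq:finite-product} with the exponents \(c_a\equiv33\,\chi_{-3}(a)a^{-47}\pmod{67}\). Raising the result to \((q-1)/67\) gives some \(\eta^e\), and I check that \(e=57\) and \(e=8\) respectively. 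The power residue symbol at \(\mathfrak q=(q,\zeta_{201}-\xi)\) equals this residue by the definition used in Theorem~\ref{thm:p67-extension}.

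\textbf{Main obstacle.} The obstacle is the convention: the embedding of \(\zeta_3\) relative to \(z\), and the choice of \(\xi\) among the \(\phi(3)\) compatible lifts. A different choice replaces \(\widetilde\mu_{47}\) by a Galois conjugate, which by Proposition~\ref{prop:integral-projector} only rescales the exponent \(e\) by \(\theta(b)\ne0\). Nontriviality is therefore convention-independent, and only the displayed exponents need the normalization. The final arithmetic check is then \(e\not\equiv0\pmod{67}\).
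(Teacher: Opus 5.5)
Your plan is correct and matches the paper's proof, which is just a list of direct modular computations: the projected products are \(29011\) and \(5414522\), and their \((q-1)/67\)-th powers are \(\eta^{57}\) and \(\eta^{8}\). Your normalization \(\xi^3=\eta\), \(\xi^{67}=-\bar z\) is exactly the one built into \eqref{eq:finite-product}. One aside that nothing depends on, since you verify directly: your guessed relation \(b\equiv y/x\) is not the right one. In both rows \(b\equiv\bar z\eta\pmod q\) (for example \(9400\cdot4908\equiv47242\pmod{173263}\)), so the Fermat identity reduces to \(z+\bar z=1\) once \(\eta^{67}=1\), and the two secant conditions say exactly that \(c-b\) and \(a+b\) are \(67\)-th powers.
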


\begin{proof}
All assertions are direct modular exponentiations.  The projected
product in the first row is \(29011\), whose
\((q-1)/67\)-th power is \(\eta^{57}\).  In the second it is
\(5414522\), whose corresponding power is \(\eta^8\).
\end{proof}

\begin{corollary}
A local Fermat identity, the two displayed secant power
conditions, and even complete splitting at the
\(67^2\)-cyclotomic level do not force the
\(\chi_{-3}\omega^{20}\) coordinate to vanish.  Any vanishing
theorem for a solution-dependent divisor must use a global
correlation among its support primes.
\end{corollary}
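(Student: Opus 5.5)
This statement is a finite computational claim, so the plan is verification organised around results already in the paper. Fix a row of the table, say \(q=173263\) with \(z=4908\), \(\bar z=168356\), \(\eta=47242\). The first check is the ambient setup: \(q\) is prime, \(q\equiv1\pmod{201}\), and the data form a split configuration.
\begin{itemize}
\item \(z\) is a root of \(X^2-X+1\) in \(\F_q\), so \(z+\bar z=1\) and \(z\bar z=1\).
\item \(\eta\) has exact order \(67\), that is, \(\eta\neq1\) and \(\eta^{67}=1\).
\end{itemize}
From these, set \(\rho_q=-\bar z\), which is a primitive cube root of unity, and \(\xi=\eta^{a}\rho_q^{b}\) for suitable CRT exponents. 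This gives an element of order \(201\) with \(\xi^3=\eta\) and \(\xi^{67}=\rho_q\), matching the normalisation of \eqref{eq:finite-product}. That identification is what lets the reduction of \(\widetilde\mu_{47}\) at \(\mathfrak q=(q,\zeta_{201}-\xi)\) be computed by \eqref{eq:finite-product}, exactly as in Theorem~\ref{thm:p67-extension}.

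The \emph{local Fermat} and \emph{secant} assertions are pure modular arithmetic, checked by fast exponentiation in \(\F_q\):
\begin{itemize}
\item \(a^{67}+b^{67}=c^{67}\) for the listed triple;
\item \(1-\bar z\eta=79276^{67}\) and \(z+\bar z\eta=95569^{67}\), with the analogous checks in the second row.
\end{itemize}
I would also point out the structural reason these conditions are compatible. Take \(a=z\) and \(c=1\), and note that \(1-\bar z\eta\) and \(z+\bar z\eta\) are the factors appearing in \(c^{67}-a^{67}\)-type cyclotomic factorisations. This explains why \(b^{67}\) lands correctly. No proof depends on this, however. The congruence \(7783927\equiv1\pmod{67^2}\) is a single division.

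The substantive claim is that \(\varphi_{67}\) is nontrivial at \(\mathfrak q\). By definition of the residue symbol, \(\varphi_{67}([\mathfrak q])=M^{(q-1)/67}\), where \(M\) is the reduction of \(\widetilde\mu_{47}\). So I would evaluate the \(132\)-term product \eqref{eq:finite-product} with the exponents \(c_a\equiv33\,\chi_{-3}(a)a^{-47}\pmod{67}\), obtaining \(29011\) and \(5414522\) respectively. I then raise each to \((q-1)/67\) and compare with the powers of \(\eta\), expecting \(\eta^{57}\) and \(\eta^{8}\). Since both exponents are nonzero modulo \(67\), Corollary~\ref{cor:kappa} gives that the class of \(\mathfrak q\) in the \(\chi_{-3}\omega^{20}\)-component is nontrivial.

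Independence of the integer lifts of \(c_a\) follows from Proposition~\ref{prop:integral-projector}. Raising to \((q-1)/67\) kills \(p\)-th powers, so any choice of lifts gives the same symbol.

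The main obstacle is bookkeeping rather than mathematics. One must ensure the labels \(z,\bar z,\eta\) in the table match the convention \(z=-\zeta_3^2\) via the chosen \(\xi\). Otherwise one evaluates \(\tau(\widetilde\mu_{47})=\widetilde\mu_{47}^{-1}\), by Lemma~\ref{lem:global-unit}, or a different Galois conjugate \(\sigma_b\), which changes the result by a factor \(\theta(b)\). Both alter the exponent, inverting it or multiplying by a nonzero scalar, but neither affects nontriviality. I would therefore first fix \(\xi\) from \(\eta\) and \(\rho_q=-\bar z\) and then run the computation, noting that the nontriviality conclusion is robust under either mislabeling.
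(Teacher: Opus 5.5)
Your proposal is correct and follows the paper's route: the corollary rests on Proposition~\ref{prop:controls}, which the paper likewise proves by direct modular exponentiation (the projected products \(29011\) and \(5414522\) raised to \((q-1)/67\) give \(\eta^{57}\) and \(\eta^{8}\), and the second row satisfies all three conditions at once). One small correction to your explicitly non-load-bearing aside: in both rows \(b=\bar z\eta\) (e.g.\ \(4908\cdot 9400\equiv 47242\pmod{173263}\)), so the Fermat identity is just \(z+\bar z=1\), and the two secant factors are \(c-b=1-\bar z\eta\) and \(a+b=z+\bar z\eta\), not factors of \(c^{67}-a^{67}\).
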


\begin{remark}[Relation to Fermat's equation]
The construction of the unramified extensions is unconditional.
At exponent \(67\), the classical Sophie Germain auxiliary prime
\[
 269=4\cdot67+1
\]
already excludes a first-case solution.  The nonzero \(67\)-th
powers modulo \(269\) are
\[
 \{1,82,187,268\};
\]
no two differ by \(1\), and \(67\) is not a \(67\)-th power
modulo \(269\).  Thus the class-field theorem above is substantive,
whereas any assertion about its value on an actual exponent-\(67\)
Fermat solution would be vacuous.
\end{remark}

\section{Reproducibility}

The accompanying program
\[
 \texttt{verify\_twisted\_class\_generators.py}
\]
uses only integer arithmetic.  It performs the following checks:

\begin{itemize}
\item enumerates every prime \(p<500\) with \(p\equiv1\pmod6\)
      and every relevant even character index;
\item evaluates the finite sum in Lemma~\ref{lem:finite-b};
\item independently recomputes the ordinary Bernoulli irregular
      indices and the seven-of-twelve regular-row count;
\item builds the Stirling polynomials and Hensel lifts;
\item evaluates the second-order local digits and the
      Teichm\"uller sums modulo \(p^3\);
\item verifies primality and exact root orders at all twelve split
      primes;
\item evaluates every factor in each integral idempotent product;
      and
\item checks each nonzero residue-symbol exponent.
\end{itemize}

For the \(p=67\) negative controls, the computation takes place in
\[
 \Z/67^3\Z,\quad
 \F_{1609},\quad
 \F_{173263},\quad
 \F_{7783927}.
\]
No computer-algebra package or unrecorded class-group computation
is used.  The finite calculations establish the local digits and
nontriviality; the exact class-component orders use
Theorem~\ref{thm:completeness}.

\section{Interpretation, scope, and further questions}

The general relationship among generalized Bernoulli values,
circular units, reflection, and class groups is classical.  The
present construction lies in the Thaine--Rubin--Greither orbit
\cite{Thaine,Rubin,Greither}.  Its specific contribution is the
uniform reflected radical \eqref{eq:projected-unit}, the exact
spectrum identity of Theorem~\ref{thm:reflection}, the complete
below-\(500\) catalogue, and a finite Artin witness for every
catalogue line.

All twelve observed lines land on the class side of the
unit--class fork.  This is naturally a Spiegelung dichotomy: the
radical lies on the even \(\theta\)-line and the resulting
unramified character on the odd reflected \(\psi\)-line.  A
twisted Vandiver-type theorem annihilating the relevant even class
component would force the class-side outcome uniformly.  Such a
statement belongs to the same unit--class comparison developed in
Greither's proof of the Gras conjecture
\cite[Corollary~4.15]{Greither}.

The theorem is finite-range.  It does not assert that every
twisted degeneracy for arbitrary \(p\) is simple, nor that this
circular unit always selects the class side.  It proves those facts
for every line below \(500\).  Extending the computation and
explaining the observed side of the fork are the two immediate
arithmetic questions.

\section*{Literature and originality note}

A targeted, non-exhaustive priority search covered:

\begin{itemize}
\item circular units, the Stickelberger ideal, and the
      Main Conjecture \cite{Sinnott,Thaine,Rubin,Greither};
\item class-group computation at prime and composite cyclotomic
      conductors \cite{Schoof,Agathocleous,Miller};
\item class-rank, Gauss-sum, and Jacobi-sum constructions
      \cite{Iimura,AnglesNuccio,Gras};
\item modern computations of irregular primes and cyclotomic
      invariants \cite{Buhler}; and
\item explicit Kummer-generator work \cite{HPST}.
\end{itemize}

These sources provide close precedents for the ingredients and
philosophy.  The search did not locate the exact
conductor-\(3p\) family \eqref{eq:projected-unit}, the complete
twelve-line catalogue, or the listed split-prime certificates.
The potentially new content is that combination, together with
the exact reflected-spectrum identity.  This is evidence at the
search depth used here, not a priority clearance.  No definitive
priority claim is made; specialist review and independent
computer-algebra reproduction remain appropriate.

\section*{Acknowledgements}

Computational exploration, drafting, and verification were
assisted by OpenAI's 5.6 Sol and Anthropic's Fable 5.  Responsibility for the mathematical
statements, source selection, and final presentation remains with
the author.

\end{document}